\newcommand{\tpt}{$({\mathbf{2+2}})$} 
\newcommand{\lessp}{\prec} 
\newcommand{\morep}{\succ}
\newcommand{\cP}{\mathcal{P}}
\newcommand{\cG}{\mathcal{G}}
\newcommand{\cMp}{\mathcal{M}}
\newcommand{\cS}{\mathcal{S}}
\newcommand{\cSp}{\mathcal{S}^{+}}
\newcommand{\cSn}{\mathcal{S}_0}
\newcommand{\cSd}{\mathcal{S}_1}
\newcommand{\rF}{\mathcal{R}}
\newcommand{\dual}[1]{\overline{#1}}          
\newcommand{\dg}{\mathrm{diag}}
\newcommand{\se}{\mathrm{se}}
\newcommand{\nw}{\mathrm{nw}}
\newcommand{\rs}{\mathrm{rs}}
\newcommand{\mg}{\mathrm{mag}}
\newcommand{\iso}{\mathrm{iso}}
\newcommand{\rest}{\mathrm{int}}
\newcommand{\PP}{G} 
\newcommand{\GP}{G^*} 
\newcommand{\SP}{S^+} 
\newcommand{\SPP}{S} 
\newcommand{\SGP}{S^*} 
\newcommand{\Sr}{S'} 
\newcommand{\Srn}{S'_0} 
\newcommand{\Srd}{S'_1} 
\newcommand{\cO}{{\cal O}} 
\newtheorem{theorem}{Theorem}[section]
\newtheorem*{theorem*}{Theorem}
\newtheorem{corollary}[theorem]{Corollary}
\newtheorem*{corollary*}{Corollary}
\newtheorem{lemma}[theorem]{Lemma}
\newtheorem*{lemma*}{Lemma}
\newtheorem*{proposition*}{Proposition}
\newtheorem{conjecture}[theorem]{Conjecture}
\newtheorem*{conjecture*}{Conjecture}
\newtheorem{fact}[theorem]{Fact}
\theoremstyle{definition}
\newtheorem*{definition*}{Definition}
\newtheorem*{example*}{Example}
\newtheorem{problem}[theorem]{Problem}
\newtheorem*{problem*}{Problem}
\theoremstyle{remark}
\newtheorem{remark}[theorem]{Remark}
\title{Counting Self-Dual Interval Orders}
\date{}
\author{V\'\i t Jel\'\i nek\thanks{
This research was supported by grant no. 090038011 from the Icelandic
Research Fund and by grant Z130-N13 of the Austrian Research Foundation
(FWF). An extended abstract of this work will appear in the proceedings of
FPSAC 2011.}\\
\small
Faculty of Mathematics, University of Vienna,\\[-3pt] 
\small
Garnisongasse 3, 1090 Vienna, Austria.\\
\small
\texttt{jelinek@kam.mff.cuni.cz}
}
\begin{document}                         
\maketitle
\begin{abstract}
In this paper, we present a new method to derive formulas for the generating
functions of interval orders, counted with respect to their size, magnitude,
and number of minimal and maximal elements. Our method allows us not only to
generalize previous results on refined enumeration of general interval orders,
but also to enumerate self-dual interval orders with respect to analogous
statistics. 

Using the newly derived generating function formulas, we are able
to prove a bijective relationship between self-dual interval orders and
upper-triangular matrices with no zero rows. Previously, a similar bijective
relationship has been established between general interval orders and
upper-triangular matrices with no zero rows and columns.

\noindent\textbf{Key words:} interval orders, \tpt-free posets, self-dual posets
\end{abstract}

\section{Introduction}
The aim of this paper is to enumerate interval orders (also known as
\tpt-free posets) with respect to several natural poset statistics,
including the size, the magnitude, and the number of minimal and maximal
elements. We are mostly motivated by the generating function formulas recently
obtained by Bousquet-M\'elou et al.~\cite{BMCDK}, Kitaev and
Remmel~\cite{KitaevRemmel}, and Dukes et al.~\cite{indistin}. 

Although the formulas derived in this paper provide a common generalization of
these previous results, the method we use is different. The previous
results were derived using a recursive bijection between interval orders and
ascent sequences, due to Bousquet-M\'elou et al.~\cite{BMCDK}. In this paper, we
instead use an encoding of interval orders by upper-triangular matrices without
zero rows and zero columns (we call such matrices \emph{Fishburn matrices}). Our
approach, which builds upon previous work of Haxell et al.~\cite{Haxell}, is
considerably simpler than the approach based on ascent sequences. More
importantly, our approach allows to easily capture the notion of poset
duality, which corresponds to transposition of Fishburn matrices. Consequently,
we are able to adapt our method to the problem of enumerating self-dual
interval orders, for which no explicit enumeration has been known before. As a
by-product, we establish a bijective correspondence between self-dual interval
orders and upper-triangular integer matrices with no zero rows, in which
several natural poset statistics map into natural matrix statistics.

\subsection*{Basic Notions}
All the posets considered in this paper are assumed to be finite. We also assume
that the posets are \emph{unlabeled}, that is, isomorphic posets are taken to be
identical. Let $P$ be a poset with a strict order relation $\lessp$. 
A \emph{strict down-set} of an element $y\in P$ is the set $D(y)$ of all the
elements of $P$ that are smaller than~$y$, i.e., $D(y)=\{x\in P;\; x\lessp y\}$.
Similarly, the \emph{strict up-set} of $y$, denoted by $U(y)$, is the set $\{x\in P;\; x\morep y\}$. Note
that $y$ is a minimal element of $P$ if and only if $D(y)$ is empty, and $y$ is
a maximal element if and only if $U(y)$ is empty.

For a poset $P$, the following conditions are known to be
equivalent~\cite{Bogart,FishburnPrvni}:
\begin{itemize}
 \item $P$ is \emph{\tpt-free}, that is, $P$ does not have an induced subposet
isomorphic to the disjoint union of two chains of length two.
\item $P$ has an \emph{interval representation}, that is, to each element
$x\in P$ we may associate a real closed interval $[l_x,r_x]$, in
such a way that $x\lessp y$ if and only if $r_x<l_y$.
\item For any two elements $x,y\in P$, the strict down-sets $D(x)$ and $D(y)$
are comparable by inclusion, i.e., $D(x)\subseteq D(y)$ or $D(y)\subseteq D(x)$.
\item For any two elements $x,y\in P$, the strict up-sets $U(x)$ and $U(y)$ are
comparable by inclusion.
\end{itemize}
The posets that satisfy these properties are known as \emph{interval orders} or
as \emph{\tpt-free posets}. Let us review some of their basic
properties. For a more thorough exposition, the reader is referred to
Fishburn's monograph~\cite{fishburn1985interval}. 

Let $P$ be an interval order. Two elements $x$ and $y$ of $P$ are
\emph{indistinguishable} if $U(x)=U(y)$ and $D(x)=D(y)$. This is an equivalence
relation on~$P$. If no two distinct elements of $P$ are indistinguishable, then
$P$ is said to be \emph{primitive}. Every interval order $P$ can be
uniquely obtained from a primitive interval order $P'$ by simultaneously
replacing each element of $P'$ by a positive number of `duplicates'. Thus, the
enumeration of primitive interval orders is a key step in the enumeration of
general interval orders.

Since any two strict down-sets in $P$ are comparable by inclusion, it is
possible to arrange all the distinct strict down-sets into an increasing chain
\[
D_1\subsetneq D_2 \subsetneq\dotsb\subsetneq D_m,
\]
where $m$ is the number of distinct strict down-sets determined by
elements of~$P$. An element $x\in P$ is said to have \emph{level $i$}, if
$D(x)=D_i$. Note that $D_1$ is always the empty set, and the elements of level
1 are exactly the minimal elements of~$P$. Following
Fishburn~\cite{FishburnLength,FishburnDM}, we call the number $m$ of distinct
strict down-sets the \emph{magnitude} of~$P$. It turns out that $m$ is also
equal to the number of distinct strict up-sets, and we can order the
strict up-sets of $P$ into a decreasing chain
\[
 U_1\supsetneq U_2 \supsetneq\dotsb\supsetneq U_m,
\]
and we say that $x$ has \emph{up-level $i$} if $U(x)=U_i$. The maximal
elements of $P$ are precisely the elements of up-level $m$, and we have
$U_m=\emptyset$. It can be shown~\cite{FishburnDM} that an element of level $i$
has an up-level greater than or equal to~$i$. An interval representation of
$P$ can be obtained by mapping an element $x$ with level $i$ and up-level
$j$ to the (possibly degenerate) interval $[i,j]$. This is the
unique representation of $P$ by intervals with endpoints belonging to the set
$[m]=\{1,2,\dotsc,m\}$, and in particular, there is no interval representation
of $P$ with fewer than $m$ distinct endpoints.

The \emph{dual} of a poset $P$ is the poset $\dual P$ with the same
elements as $P$ and an order relation $\dual\lessp$ defined by $x\dual\lessp
y\iff y\!\lessp\! x$. A poset is \emph{self-dual} if it is isomorphic to its
dual.
The dual of an interval order $P$ of magnitude $m$ is again an interval order of
magnitude $m$, and an element of level $i$ and up-level $j$ in $P$ has the
level $m+1-j$ and up-level $m+1-i$ in~$\dual P$.

Throughout this paper, an important part will be played by a bijective
correspondence between interval orders and a certain kind of integer matrices,
which we will call Fishburn matrices. We will state the key properties of the
correspondence without proof; more details can be found, e.g., in the work of
Fishburn~\cite{FishburnLength,fishburn1985interval}, where these matrices are
called `characteristic matrices'. 

A \emph{Fishburn matrix} is an upper-triangular square matrix
$M$ of nonnegative integers with the property that every row and every column
contains a nonzero entry. A Fishburn matrix is called \emph{primitive} if all
its entries are equal to 0 or~1. We will assume throughout this paper that each
matrix has its rows numbered from top to bottom, and columns numbered
left-to-right, starting with row and column number one. We let $M_{ij}$ denote
the entry of $M$ in row $i$ and column~$j$.

An interval order $P$ of magnitude $m$ corresponds to an $m\times m$ Fishburn
matrix $M$ with $M_{ij}$ being equal to the number
of elements of $P$ that have level $i$ and up-level~$j$. Conversely, given an
$m\times m$ Fishburn matrix $M$, we may recover the corresponding interval
order $P$ by taking the collection of intervals that contains precisely
$M_{ij}$ copies of the interval $[i,j]$, and taking this to be the
interval representation of~$P$. 

This correspondence is a bijection between Fishburn matrices and interval
orders. In fact, in this correspondence, each nonzero entry
$M_{ij}$ of $M$ can be associated with a set of $M_{ij}$ indistinguishable
elements of~$P$. Note that the sum of the $i$-th row of $M$ is equal to the
number of elements of level $i$ in $P$, and similarly for column-sums and
up-levels. 

Primitive interval orders correspond to primitive Fishburn matrices.
If the order $P$ is mapped to a matrix $M$, then the dual order $\dual P$ is
mapped to the matrix $\dual M$ obtained from $M$ by transposition along the
diagonal running from bottom-left to top-right. If a matrix $M$ is equal to
$\dual M$, we call it \emph{self-dual}. Of course, self-dual matrices are
representing precisely the self-dual interval orders.

\subsection*{Previous work and our results}

Interval orders are equinumerous with several other combinatorial 
structures. Apart from the correspondence between interval orders and Fishburn
matrices, there are also bijections mapping interval orders to ascent
sequences~\cite{BMCDK}, Stoimenow matchings~\cite{Stoimenow}, certain classes
of pattern-avoiding permutations~\cite{BMCDK,bivincular}, or special kinds of
inversion tables~\cite{Levande}. Some of these combinatorial
structures have been studied independently even before their relationship to
interval orders was discovered. Thus, some results on the enumeration of interval
orders were first derived in different contexts, and some of them
were in fact derived several times under different guises.

The concept of interval order has been introduced by
Fishburn~\cite{FishburnPrvni} in 1970. In 1976, Andresen and
Kjeldsen~\cite{AndresenKjeldsen}, motivated by a counting problem related to
subgraphs of transitively oriented tournaments, introduced (under different
terminology) the problem of enumerating Fishburn matrices. They studied,
among other problems, the number of primitive Fishburn matrices with respect to
their dimension and the number of elements in the first row, which in poset
terminology corresponds to the number of primitive interval orders of a given
magnitude and number of minimal elements (but not the number of all elements).
Andresen and Kjeldsen obtained asymptotic bounds for the number of these
matrices, as well as recurrence formulas that allowed them to compute several
exact initial values. At the time of their writing, the connection between
Fishburn matrices and interval orders was not known, and it appears that their
results went unnoticed by later works on interval orders. 

In 1987, Haxell, McDonald and Thomason~\cite{Haxell} provided an efficient
way to compute the number of interval orders, using a recurrence derived using
Fishburn matrices, which were already known to be equinumerous with interval
orders, thanks to the work of Fishburn~\cite{fishburn1985interval}.

In 1998, Stoimenow~\cite{Stoimenow} introduced the concept of `regular
linearized chord diagram', later often referred to as a `Stoimenow
matching'. A Stoimenow matching of size $n$ is a matching on the set $[2n]$ in 
which no two nested edges have adjacent endpoints. Stoimenow has
introduced these matchings as a tool in the study of Vassiliev invariants of
knots, and computed several asymptotic bounds on their number. Later, these
bounds were improved by Zagier~\cite{Zagier}, who also showed that
the generating function of Stoimenow matchings enumerated by their size admits a
simple formula 
\begin{equation}
F(x)=\sum_{n\ge 0}\prod_{i=1}^n\left(1-(1-x)^i\right).\label{eq-zag}
\end{equation}

Recently, Bousquet-M\'elou, Claesson, Dukes and Kitaev~\cite{BMCDK} have found
a sequence of bijections, showing that interval orders are equinumerous with
several other combinatorial objects, including Stoimenow matchings and ascent
sequences. They have also provided an alternative proof for \eqref{eq-zag}, and
derived a formula for the refined generating function that counts interval order
with respect to their size and magnitude. These results have prompted a renewed
interest in the study of interval orders. Several other papers have focused on
bijections between interval orders and other objects. For
instance, Dukes and Parviainen~\cite{dukes2010ascent} have described a direct
bijection between Fishburn matrices and ascent sequences, Claesson and
Linusson~\cite{CL} gave a direct mapping from Fishburn matrices to
Stoimenow matchings, while the papers of Claesson et al.~\cite{CDK} and Dukes
et al.~\cite{DJK} extend the bijection between interval orders and
Fishburn matrices to more general combinatorial structures.

Another line of research has focused on refined enumeration of interval orders
with respect to some natural poset statistics. Dukes, Kitaev, Remmel and
Steingr\'\i msson~\cite{indistin} have found an expression for the generating
function that enumerates primitive interval orders with respect to their size
and magnitude, and deduced a formula that counts interval orders by their size,
magnitude and the number of indistinguishable elements. Kitaev and
Remmel~\cite{KitaevRemmel} have obtained, among other results, the formula 
\begin{equation}
F(x,y)= 1+\sum_{n\ge
0}\frac{xy}{(1-xy)^{n+1}}\prod_{i=1}^n\left(1-(1-x)^i\right),\label{eq-kitrem}
\end{equation}
where $F(x,y)$ is the generating function of interval orders in which $x$ counts
the size and $y$ the number of minimal elements of the interval order. They
conjectured that $F(x,y)$ can be also expressed in the following form:
\begin{equation}
 F(x,y)=\sum_{n\ge
0}\prod_{i=1}^n\left(1-(1-x)^{i-1}(1-xy)\right). \label{eq-conj}
\end{equation}
This conjecture has been subsequently confirmed by Yan~\cite{Yan} and
independently by Levande~\cite{Levande}. Let us remark formula~\eqref{eq-conj}
also appears in Zagier's work~\cite[Theorem 1]{Zagier}, but there it is
interpreted in terms of Stoimenow matchings, not interval orders. 

In Section~\ref{sec-nonsym} of this paper, we generalize the
above-mentioned results of Bousquet-M\'elou et al.~\cite{BMCDK}, Dukes et
al.~\cite{indistin}, and Kitaev and Remmel~\cite{KitaevRemmel}, by obtaining a
closed-form expression for the generation function of primitive interval orders,
counted with respect to their magnitude, their size, and their number of minimal
and maximal elements. From this expression, it is possible to directly derive
the generating function of general interval orders, or of interval orders with
bounded size of indistinguishability classes, counted with respect to the same
statistics. 

However, the main significance of our results is not in counting more
statistics than previous papers, but rather in presenting a much simpler method
to derive the generating functions. Previous results were largely based on
a bijection, constructed by Bousquet-M\'elou et al.~\cite{BMCDK}, which maps
interval orders to a certain kind of integer sequences, known as `ascent
sequences'. This bijection has a complicated recursive definition, which
consequently leads to difficult recurrences for generating functions, which then
require a large amount of ingenuity to be solved into closed form expressions,
typically by an application of the kernel method. In contrast, the arguments in
this paper are based on the direct encoding of interval orders as Fishburn
matrices. We exploit a relationship between Fishburn matrices of dimension $m$
and those of dimension $m+1$ to obtain a recurrence for the generating
function that can be easily solved by elementary expression manipulation.

To further illustrate the benefit of this approach, in Section~\ref{sec-sym} we
enumerate self-dual interval orders, by a slightly more elaborate application
of the same basic technique. The duality map of interval orders translates into
`obvious' involutions on related combinatorial classes --- for example, it
corresponds to transposition of a Fishburn matrix along the north-east diagonal,
or to the reversal of vertex order in a Stoimenow matching. This suggest that
poset duality represents a fundamental symmetry of these classes of objects, and
it is therefore natural to consider the enumeration of symmetric objects, that
is, of objects that are fixed points of this symmetry map.

The problems of counting self-dual interval orders with respect to their size
and of counting primitive self-dual Fishburn matrices with respect to their
dimension do not seem to have been addressed by previous research, presumably
because there is no known way to characterize self-duality in terms of ascent
sequences. In view of this, it is remarkable that the expressions we obtain for
the generating functions of self-dual objects are almost as simple as those of
their non-self-dual counterparts. 

Moreover, in Section~\ref{sec-red}, we introduce \emph{row-Fishburn matrices},
which are upper-triangular matrices with no zero rows, and we prove that there
is a close link between the refined enumeration of these matrices and the
refined enumeration of self-dual interval orders. This yields a surprising
analogue to the correspondence between general interval orders and Fishburn
matrices.

\section{Enumeration of Interval Orders}\label{sec-nonsym}

Recall that a primitive poset is a poset that does not contain any two
indistinguishable elements. Our main concern will be to find an
expression for the generating function of primitive interval orders, or
equivalently, of 01-Fishburn matrices.

Let us call an element of a poset $P$ \emph{isolated} if it is not comparable to
any other element of $P$. Note that an element is isolated if and only if it is
both minimal and maximal. The number of isolated elements of an interval order
$P$ is equal to the value of the top-right cell of the corresponding Fishburn
matrix. We will call the top-right cell of the matrix \emph{the corner cell}.

Let us also say that an element of the poset $P$ is
\emph{internal} if it is neither minimal nor maximal. We will consider these
statistics of an interval order $P$: 
\begin{itemize}
\item the magnitude of $P$, denoted by $\mg(P)$,
\item the number of isolated elements, denoted by $\iso(P)$,
\item the number of non-isolated minimal elements, denoted by $\min(P)$,
\item the number of non-isolated maximal elements, denoted by $\max(P)$, and
\item the number of internal elements, denoted by
$\rest(P)$. 
\end{itemize}
In particular, the size of $P$ is equal to $\iso(P)+\min(P)+\max(P)+\rest(P)$,
and its number of minimal elements is equal to $\min(P)+\iso(P)$. In
Table~\ref{tab-stats}, we summarize these statistics, with their interpretation
both in terms of posets and in terms of matrices. As a matter of convenience,
we restrict ourselves to non-empty interval orders and non-empty matrices in
all the arguments. If $M$ is the Fishburn matrix representing an interval order
$P$, we write $\iso(M)$ as a synonym for $\iso(P)$, and similarly for the other
statistics.

Let $\cP$ be the set of all non-empty primitive interval
orders, and let $\PP$ be the generating function
\[
 \PP(t,v,w,x,y)=\sum_{P\in\cP} t^{\mg(P)}x^{\iso(P)} y^{\min(P)} v^{\max(P)}
w^{\rest(P)}.
\]

We shall use the following notation: for an integer $n\ge 0$,
we let $V_n(a,b)$ denote the polynomial $(a+1)(b+1)^n-1$.

\begin{table}
\small\raggedright
\begin{tabular}{|c|p{0.32\textwidth}|p{0.32\textwidth}|c|}
\hline
\textbf{statistic} & \textbf{poset interpretation} & \textbf{matrix
interpretation} & \textbf{variable} \\
\hline\hline
$\mg$ & magnitude & number of rows & $t$ \\ \hline
$\iso$ & num. of isolated elements & value of corner cell & $x$\\ \hline 
$\min$ & num. of non-isolated minimal elements & sum of
the first row, except the corner cell & $y$\\ \hline
$\max$ & 
num. of non-isolated maximal elements\hfil & 
sum of the last column, except the corner cell\hfil &
 $v$\\ \hline
$\rest$ & num. of internal elements & sum of cells outside
first row and last column & $w$\\ \hline
\end{tabular}
\caption{The statistics of interval orders}\label{tab-stats}
\end{table}

We can now state the main result of this section.
\begin{theorem}\label{thm-nesym}
 The generating function $\PP(t,v,w,x,y)$ satisfies the identity
\begin{align}  
\PP(t,v,w,x,y) =\sum_{n\ge 
0}t^{n+1}\frac{V_n(x,y)}{1+tV_n(v,w)}\prod_{i=0}^{n-1}\frac{V_i(v,w)
} { 1+tV_i(v,w)}.  \label{eq-main}
\end{align}
\end{theorem}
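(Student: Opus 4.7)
The plan is to enforce the row non-zero constraints of the primitive Fishburn matrix directly and to handle the column non-zero constraints via inclusion--exclusion. For an $m \times m$ upper-triangular $01$-matrix, the rows occupy disjoint sets of cells, so if one demands only that every row is non-zero, the generating function factors row-by-row: row~$1$ sums to $(x+1)(y+1)^{m-1}-1 = V_{m-1}(x,y)$ over its non-zero choices (the corner cell $M_{1,m}$ contributes $x$, the other $m-1$ cells of row~$1$ contribute $y$), while row~$i$ for $i\ge 2$ sums to $(v+1)(w+1)^{m-i}-1 = V_{m-i}(v,w)$ (the cell $M_{i,m}$ contributes $v$, the other $m-i$ cells contribute $w$). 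Hence the generating function of $01$-upper-triangular matrices with non-zero rows and arbitrary columns equals $\sum_{m\ge 1} t^m V_{m-1}(x,y) \prod_{k=0}^{m-2} V_k(v,w)$.

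To restrict to matrices in which every column is also non-zero, I would next apply inclusion--exclusion over the set $S$ of columns forced to be zero. Since $m\notin S$ is necessary (otherwise row $m$ would be forced to zero), $S\subseteq\{1,\dots,m-1\}$. With $S$ fixed, the effective cells of row~$i$ are those in $\{i,\dots,m\}\setminus S$, contributing weight $V_{m-1-|S|}(x,y)$ for row~$1$ and $V_{|\{i,\dots,m\}\setminus S|-1}(v,w)$ for $i\ge 2$. I would then parametrize $J=\{1,\dots,m\}\setminus S=\{j_1<\dots<j_\ell=m\}$ by its consecutive differences $d_a=j_a-j_{a-1}$ (with $j_0=0$), so that $\ell=m-|S|$ and the $d_a$ become independent positive integers summing to $m$. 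Each row $i\in\{2,\dots,m\}$ lying in the interval $(j_{a-1},j_a]$ contributes the factor $V_{\ell-a}(v,w)$, hence the full row-$i\ge 2$ product becomes $V_{\ell-1}(v,w)^{d_1-1}\prod_{a=2}^{\ell} V_{\ell-a}(v,w)^{d_a}$; the ``$-1$'' in the exponent of the first factor reflects that $i=1$ is excluded from the product over $i\ge 2$.

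Using $(-1)^{|S|}=\prod_a(-1)^{d_a-1}$ and $t^m=\prod_a t^{d_a}$, the sum factors over $d_1,\dots,d_\ell$, and each one-variable sum collapses by geometric summation:
\[
\sum_{d\ge 1} t^d(-1)^{d-1}V_{\ell-1}(v,w)^{d-1}=\frac{t}{1+tV_{\ell-1}(v,w)},\quad
\sum_{d\ge 1} t^d(-1)^{d-1}V_{\ell-a}(v,w)^{d}=\frac{tV_{\ell-a}(v,w)}{1+tV_{\ell-a}(v,w)}.
\]
Setting $n=\ell-1$ and reindexing the product $\prod_{a=2}^{\ell}$ via $b=n+1-a$ converts the result directly into identity~\eqref{eq-main}. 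The only delicate point is the off-by-one asymmetry between the handling of $d_1$ and the remaining $d_a$, caused by the exclusion of row $i=1$ from the product of row-$i\ge 2$ weights; once this bookkeeping is correctly tracked, everything else is routine summation of geometric series.
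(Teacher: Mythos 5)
Your proof is correct, and it takes a genuinely different route from the paper. The paper proves Theorem~\ref{thm-nesym} via Lemma~\ref{lem-rekur}: a column-splitting construction builds each primitive $(k+1)\times(k+1)$ Fishburn matrix from a unique $k\times k$ one, yielding the recurrence $\PP_{k+1}=v\,\sigma[\PP_k]-v\PP_k$ and hence the functional equation $\PP=\frac{tx}{1+tv}+\frac{tv}{1+tv}\sigma[\PP]$, which is then iterated to produce the sum. You instead compute $\PP$ directly: you exploit the fact that the row-nonemptiness constraints decouple (giving the factor $V_{m-1}(x,y)\prod_{k=0}^{m-2}V_k(v,w)$ for row-nonzero matrices), impose the column constraints by inclusion--exclusion over the set $S$ of forced-zero columns, and reparametrize by the gap composition $d_1,\dotsc,d_\ell$ of the surviving column set $J$. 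I checked the bookkeeping: the weight $V_{\ell-a}(v,w)$ for a row $i\in(j_{a-1},j_a]$, the exponent $d_1-1$ accounting for the exclusion of row $1$, the sign $(-1)^{|S|}=\prod_a(-1)^{d_a-1}$, and the two geometric sums are all right, and the reindexing $n=\ell-1$ lands exactly on~\eqref{eq-main}; the rearrangements are legitimate since every term with parameter $m$ is a multiple of $t^m$. Your argument avoids both the recursion and the kernel-free iteration step, and it gives a direct combinatorial meaning to the shape of the formula ($n+1$ is the number of nonzero columns retained in the inclusion--exclusion, and each factor $\frac{tV_i}{1+tV_i}$ is a geometric series over a gap length); what it gives up is the constructive matrix-extension bijection, which the paper reuses almost verbatim in Section~\ref{sec-sym} to handle self-dual matrices, where a naive inclusion--exclusion over columns would interact awkwardly with the symmetry constraint.
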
                                                                  

\begin{remark}\label{rem-converg}
Let $S_n\equiv S_n(t,v,w,x,y)$ denote the $n$-th summand of the sum on the right-hand side of~\eqref{eq-main}. Clearly, $S_n$ is a
multiple of $t^{n+1}$. Consequently, the sum on the right-hand side of~\eqref{eq-main} converges as a sum of power 
series in~$t$. Moreover, since for each $i\ge 0$ the polynomial $V_i(v,w)$ has constant term equal
to zero, it follows that $S_n$ has total degree at least $n$ in the
variables $v$ and~$w$. Thus, the sum also converges as a sum of power series in
$v$ and~$w$. Furthermore, for all $k$, the coefficient 
of $t^k$ in $S_n$ is a polynomial in $v$, $w$, $x$ and $y$, and for all
$k,\ell$, the coefficient of $v^kw^\ell$ is a polynomial in $t$, $x$, and~$y$.
\end{remark}

The properties stated in the above remark make the identity~\eqref{eq-main}
amenable to many combinatorially meaningful substitutions. Before we state the
proof of the theorem, we demonstrate several possible substitutions. Note
that some of the formulas we derive have been previously obtained by
different methods.

\begin{corollary}\label{cor-primvel}\cite[Theorem 8]{indistin}
Let $p_k$ be the number of primitive interval orders of size~$k$. The generating function of $p_k$ is equal to 
\begin{align*}
\sum_{k\ge 1} p_k x^k &= \PP(1,x,x,x,x)=\sum_{n\ge 0} \prod_{i=0}^{n} \frac{ V_i(x,x) }{1+V_i(x,x)}=\sum_{n\ge 0} \prod_{i=0}^{n}\left(1- \frac{1}{(1+x)^{i+1}}\right).\\
\end{align*}
\end{corollary}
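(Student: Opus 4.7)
The plan is to obtain this corollary as a direct specialization of Theorem~\ref{thm-nesym}. Since the size of an interval order $P$ equals $\iso(P)+\min(P)+\max(P)+\rest(P)$ by the decomposition noted in the text, setting $t=1$ and $v=w=x=y=x$ in $\PP(t,v,w,x,y)$ turns the monomial $t^{\mg(P)}x^{\iso(P)}y^{\min(P)}v^{\max(P)}w^{\rest(P)}$ into $x^{|P|}$. Hence $\sum_{k\ge 1}p_k x^k=\PP(1,x,x,x,x)$, which gives the first claimed equality.

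Before plugging into \eqref{eq-main}, I need to check that the specialization is well defined in $\mathbb{Z}[[x]]$. By Remark~\ref{rem-converg}, the summand $S_n$ has total degree at least $n$ in $v$ and $w$, so after the substitution $v=w=x$ the $n$-th summand lies in $x^n\mathbb{Z}[[x]]$, and the infinite sum converges coefficient-wise in $\mathbb{Z}[[x]]$. Moreover, each factor $1+tV_i(v,w)$ evaluates to $1+V_i(x,x)=(1+x)^{i+1}$, a unit of $\mathbb{Z}[[x]]$, so the individual summands make sense. Thus substituting into \eqref{eq-main} is legitimate.

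Now applying \eqref{eq-main} with $t=1$ and $v=w=x=y=x$, and merging the $n$-th factor with the product, yields
\[
\PP(1,x,x,x,x)=\sum_{n\ge 0}\frac{V_n(x,x)}{1+V_n(x,x)}\prod_{i=0}^{n-1}\frac{V_i(x,x)}{1+V_i(x,x)}=\sum_{n\ge 0}\prod_{i=0}^{n}\frac{V_i(x,x)}{1+V_i(x,x)},
\]
which is the middle expression in the corollary. For the final equality I use the definition $V_i(a,b)=(a+1)(b+1)^i-1$ to compute $V_i(x,x)=(1+x)^{i+1}-1$, so that
\[
\frac{V_i(x,x)}{1+V_i(x,x)}=\frac{(1+x)^{i+1}-1}{(1+x)^{i+1}}=1-\frac{1}{(1+x)^{i+1}},
\]
and substituting this into the product gives the stated closed form. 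No serious obstacle arises; the only point that requires care is verifying the convergence of the substitution, which is handled by Remark~\ref{rem-converg}.
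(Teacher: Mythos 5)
Your proposal is correct and matches the paper's intended argument: the corollary is stated there without proof as a direct specialization $t=1$, $v=w=x=y=x$ of Theorem~\ref{thm-nesym}, justified by Remark~\ref{rem-converg} and the identity $V_i(x,x)=(1+x)^{i+1}-1$, exactly as you do. Your explicit check that each summand lies in $x^n\mathbb{Z}[[x]]$ and that $1+V_i(x,x)$ is a unit is a welcome bit of added care, but not a different route.
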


\begin{corollary}\label{cor-primmag} Let $m_k$ be the number of primitive $k\times k$ Fishburn matrices (or equivalently, the number of primitive interval orders of magnitude $k$). Then
\begin{align*}
\sum_{m\ge 1} m_k t^k &= \PP(t,1,1,1,1)= \sum_{n\ge 0}t^{n+1} \prod_{i=0}^n \frac{2^{i+1}-1}{1+t(2^{i+1}-1)}.
\end{align*}
\end{corollary}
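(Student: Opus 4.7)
The plan is to derive this corollary directly from Theorem \ref{thm-nesym} by the substitution $v=w=x=y=1$, which kills the refined statistics $\iso,\min,\max,\rest$ and leaves only the magnitude marked by $t$. First I would observe that by the bijection between Fishburn matrices and interval orders, a primitive $k\times k$ Fishburn matrix corresponds to a primitive interval order of magnitude $k$; hence $\sum_{k\ge 1} m_k t^k$ is obtained from $\PP(t,v,w,x,y)$ by specializing every variable except $t$ to $1$.

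Next I would check that the specialization is legitimate as an identity of power series in $t$. By Remark \ref{rem-converg}, the $n$-th summand $S_n$ of the right-hand side of \eqref{eq-main} is divisible by $t^{n+1}$ and its coefficient of each power of $t$ is a polynomial in $v,w,x,y$. Therefore evaluating at $v=w=x=y=1$ gives a well-defined power series in $t$ (in fact, for each $k$ only finitely many $S_n$ contribute to the coefficient of $t^k$), so the substitution commutes with the infinite sum.

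Now I would carry out the elementary computation: since $V_n(a,b)=(a+1)(b+1)^n-1$, setting $a=b=1$ gives $V_n(1,1)=2\cdot 2^n-1=2^{n+1}-1$. Substituting into \eqref{eq-main} yields
\[
\PP(t,1,1,1,1)=\sum_{n\ge 0}t^{n+1}\frac{2^{n+1}-1}{1+t(2^{n+1}-1)}\prod_{i=0}^{n-1}\frac{2^{i+1}-1}{1+t(2^{i+1}-1)}.
\]
The leading factor matches the $i=n$ term of the displayed product, so it may be absorbed to give the closed form stated in the corollary.

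There is no real obstacle here: the work was done in Theorem \ref{thm-nesym}, and the corollary is a formal specialization. The only point that requires a word of justification is the validity of the infinite sum after substitution, which is supplied by Remark \ref{rem-converg}.
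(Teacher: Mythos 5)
Your proposal is correct and is exactly the argument the paper intends: the corollary is the specialization $v=w=x=y=1$ of Theorem~\ref{thm-nesym}, using $V_i(1,1)=2^{i+1}-1$ and absorbing the leading fraction as the $i=n$ factor of the product, with the legitimacy of the term-by-term substitution supplied by Remark~\ref{rem-converg} just as you say.
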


Let $\cG$ be the set of non-empty interval orders, and let $\GP(t,v,w,x,y)$ be
the corresponding generating function
\[
 \GP(t,v,w,x,y)=\sum_{P\in\cG} t^{\mg(P)}x^{\iso(P)} y^{\min(P)} v^{\max(P)}
w^{\rest(P)}.
\]
Each interval order can be uniquely obtained from a primitive interval order by
replacing each element with a group of indistinguishable elements. In terms of
generating functions, this means that 
\[
\GP(t,v,w,x,y)= \PP(t,\frac{v}{1-v},\frac{w}{1-w},\frac{x}{1-x},\frac{y}{1-y}).
\]
By substituting into \eqref{eq-main}, and using, e.g., the identity
\[V_n(\frac{a}{1-a},\frac{b}{1-b})=-\frac{V_n(-a,-b)}{(1-a)(1-b)^n}=
-\frac{V_n(-a,-b)}{1+V_n(-a,-b)}\]
we get the next corollary.

\begin{corollary}\label{cor-gen} The generating function $\GP(t,v,w,x,y)$ is equal to
\begin{align*}
\sum_{n\ge 0} t^{n+1} \frac{ 1+V_n(-v,-w)}{ 1+V_n(-x,-y) }\cdot \frac{V_n(-x,-y)}{( t-1)V_n(-v,-w) -1 }\cdot
\prod_{i=0}^{n-1}\frac{V_i(-v,-w) }{ (t-1)V_i(-v,-w) -1}.
\end{align*}
\end{corollary}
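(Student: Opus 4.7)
The plan is to derive Corollary~\ref{cor-gen} as a direct algebraic consequence of Theorem~\ref{thm-nesym}, exploiting the correspondence between interval orders and primitive interval orders with vertex-inflations. First I would recall that every non-empty interval order $P$ arises uniquely from a primitive interval order $P'$ by replacing each element of $P'$ with a non-empty class of indistinguishable duplicates. Since duplicating an element preserves its level and up-level, an isolated (respectively non-isolated minimal, non-isolated maximal, internal) element of $P'$ becomes a group of elements of the same type in $P$, and the magnitude does not change. At the level of generating functions this means that a single $\iso$-element of $P'$ contributes $x+x^2+\cdots = x/(1-x)$ to $\GP$, and similarly for the remaining three statistics, giving
\[
 \GP(t,v,w,x,y) = \PP\!\left(t,\tfrac{v}{1-v},\tfrac{w}{1-w},\tfrac{x}{1-x},\tfrac{y}{1-y}\right).
\]
This substitution is well-defined in the formal sense used by Remark~\ref{rem-converg}, because each of $x/(1-x)$, $y/(1-y)$, $v/(1-v)$, $w/(1-w)$ has zero constant term.

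Next I would verify the auxiliary identity quoted in the excerpt. Since $V_n(a,b)=(a+1)(b+1)^n-1$,
\[
 V_n\!\left(\tfrac{a}{1-a},\tfrac{b}{1-b}\right) = \frac{1}{(1-a)(1-b)^n}-1 = \frac{1-(1-a)(1-b)^n}{(1-a)(1-b)^n},
\]
and since $1+V_n(-a,-b) = (1-a)(1-b)^n$, this equals $-V_n(-a,-b)/(1+V_n(-a,-b))$, as asserted.

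Finally I would substitute term by term into the formula of Theorem~\ref{thm-nesym}. Applying the identity above shows that each factor $V_i(v,w)$ becomes $-V_i(-v,-w)/(1+V_i(-v,-w))$, so the denominator $1+tV_i(v,w)$ becomes $(1-(t-1)V_i(-v,-w))/(1+V_i(-v,-w))$; the two copies of $1+V_i(-v,-w)$ cancel, yielding
\[
 \frac{V_i(v,w)}{1+tV_i(v,w)} \;\longmapsto\; \frac{V_i(-v,-w)}{(t-1)V_i(-v,-w)-1},
\]
which is exactly the factor appearing in the product over $i=0,\dots,n-1$ of Corollary~\ref{cor-gen}. The outer factor $V_n(x,y)/(1+tV_n(v,w))$ of \eqref{eq-main} transforms by the same recipe, but the substitution is now applied to $V_n(x,y)$ in the numerator and to $V_n(v,w)$ in the denominator, so the two cancelling pieces no longer match; the residual imbalance is precisely the prefactor $(1+V_n(-v,-w))/(1+V_n(-x,-y))$, multiplied by the usual $n$-th factor $V_n(-x,-y)/((t-1)V_n(-v,-w)-1)$. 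Reassembling these three transformed pieces produces the right-hand side of Corollary~\ref{cor-gen}.

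The only non-trivial part of the argument is bookkeeping: tracking the two minus signs produced by the identity as they pass through $1+tV_i(v,w)$, and correctly separating numerator and denominator at the outer index $n$ so that the correcting ratio $(1+V_n(-v,-w))/(1+V_n(-x,-y))$ emerges cleanly. No further combinatorial or analytic input is needed beyond Theorem~\ref{thm-nesym}, the inflation principle, and the algebraic identity verified in the second paragraph.
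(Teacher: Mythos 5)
Your proposal is correct and follows exactly the paper's own route: the inflation substitution $\GP(t,v,w,x,y)=\PP\bigl(t,\tfrac{v}{1-v},\tfrac{w}{1-w},\tfrac{x}{1-x},\tfrac{y}{1-y}\bigr)$ combined with the identity $V_n\bigl(\tfrac{a}{1-a},\tfrac{b}{1-b}\bigr)=-V_n(-a,-b)/(1+V_n(-a,-b))$ applied termwise to~\eqref{eq-main}. The paper leaves the final bookkeeping implicit, and your explicit tracking of the cancellations (including the residual factor $(1+V_n(-v,-w))/(1+V_n(-x,-y))$ at the outer index) checks out.
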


From Corollary~\ref{cor-gen} we can obtain yet another proof of the
formula~\eqref{eq-conj} derived by Levande~\cite{Levande} and Yan~\cite{Yan} (and indirectly also by Zagier~\cite{Zagier}) for the generating function of
interval orders counted by their size and number of maximal elements.

\begin{corollary}\label{cor-gen-min}\cite{Levande,Yan,Zagier} 
Let $g_{k,\ell}$ be the number of interval orders with $k$ elements and having
exactly $\ell$ maximal elements (including isolated ones). We have
\begin{align*}
\sum_{k,\ell\ge 1} g_{k,\ell}\, r^k s^\ell&=\GP(1,rs,r,rs,r)=\sum_{n\ge 0} \prod_{i=0}^n -V_i(-rs,-r)\\
&=\sum_{n\ge 0} \prod_{i=0}^n (1-(1-rs)(1-r)^i).
\end{align*}
\end{corollary}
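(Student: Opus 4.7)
The plan is to obtain Corollary~\ref{cor-gen-min} as a direct specialization of the formula already proved in Corollary~\ref{cor-gen}. First I justify that the substitution $\GP(1,rs,r,rs,r)$ tracks exactly the right statistics. Setting $t=1$ ignores the magnitude; the choices $v=rs$ and $x=rs$ weight each non-isolated maximal element and each isolated element by $r$ (for its contribution to the size) and by an additional $s$ (for being counted as maximal), while the choices $w=r$ and $y=r$ weight internal and non-isolated minimal elements by $r$ alone. Since every element of an interval order falls into exactly one of the four classes tracked by $v,w,x,y$, and isolated elements are by definition both minimal and maximal, this substitution produces $\sum_{k,\ell\ge 1} g_{k,\ell}\,r^k s^\ell$, with the promised interpretation of $\ell$ as the total number of maximal elements including isolated ones.

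The bulk of the work is then a routine simplification of the right-hand side of Corollary~\ref{cor-gen} under this substitution. Two features conspire to produce a clean expression. First, setting $t=1$ collapses each denominator $(t-1)V_i(-v,-w)-1$ to $-1$, killing all the rational factors and turning the $n$th summand into a signed product. Second, the choice $v=x$ and $w=y$ makes the ratio $(1+V_n(-v,-w))/(1+V_n(-x,-y))$ equal to $1$ termwise. After these two simplifications, the $n$th summand reduces to
\[
(-V_n(-rs,-r))\prod_{i=0}^{n-1}(-V_i(-rs,-r))=\prod_{i=0}^{n}(-V_i(-rs,-r)).
\]

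The final step is to rewrite each factor in closed form: from $V_i(a,b)=(a+1)(b+1)^i-1$ one immediately has $-V_i(-rs,-r)=1-(1-rs)(1-r)^i$, which is precisely the factor appearing on the right-hand side of the claimed identity. There is no genuine obstacle in this argument; the only things to monitor are the bookkeeping that matches poset statistics to the five variables, and the sign tracking when substituting negatives into $V_i$.
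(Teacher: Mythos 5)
Your proposal is correct and matches the paper's (implicit) derivation exactly: the corollary is obtained by specializing Corollary~\ref{cor-gen} at $t=1$, $v=x=rs$, $w=y=r$, noting that the denominators collapse to $-1$ and the ratio $\bigl(1+V_n(-v,-w)\bigr)/\bigl(1+V_n(-x,-y)\bigr)$ becomes $1$. Your justification of the substitution in terms of the four element classes and the final rewriting $-V_i(-rs,-r)=1-(1-rs)(1-r)^i$ are both accurate.
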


Kitaev and Remmel~\cite{KitaevRemmel} have obtained a different expression for
the generating
function from the previous corollary. This alternative expression can also be
derived from the general formula for $\GP$.

\begin{lemma}\label{lem-gen-min2}\cite{KitaevRemmel} With $g_{k,\ell}$ as in Corollary~\ref{cor-gen-min}, we have
\begin{equation}
\sum_{k,\ell\ge 1} g_{k,\ell}\, r^k s^\ell 
=\sum_{n\ge 0} \frac{rs}{(1-rs)^{n+1}}\prod_{i=0}^{n-1}\left(1-(1-r)^{i+1}\right).\label{eq-gen-min2}
\end{equation}
\end{lemma}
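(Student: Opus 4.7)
The plan is to apply Corollary~\ref{cor-gen} to the substitution that is \emph{dual} to the one used in deriving Corollary~\ref{cor-gen-min}. Poset duality fixes the statistics $\mg$, $\iso$ and $\rest$, while interchanging $\min$ and $\max$; hence $\GP(t,v,w,x,y) = \GP(t,y,w,x,v)$, and in particular
\[
\sum_{k,\ell\ge 1} g_{k,\ell}\,r^k s^\ell \;=\; \GP(1,rs,r,rs,r) \;=\; \GP(1,r,r,rs,rs).
\]
The first choice of variables already produced Corollary~\ref{cor-gen-min}; I will now evaluate the second via the same formula.

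Under the substitution $(t,v,w,x,y) = (1,r,r,rs,rs)$ we have $V_i(-v,-w) = (1-r)^{i+1}-1$ and $V_i(-x,-y) = (1-rs)^{i+1}-1$, while each denominator $(t-1)V_i(-v,-w)-1$ collapses to $-1$. Writing $P_n := \prod_{i=1}^n(1-(1-r)^i)$, Corollary~\ref{cor-gen} reduces to
\[
\GP(1,r,r,rs,rs) \;=\; \sum_{n\ge 0}(1-r)^{n+1}P_n\left[\frac{1}{(1-rs)^{n+1}}-1\right] \;=\; \sum_{n\ge 0}\frac{(1-r)^{n+1}P_n}{(1-rs)^{n+1}} \;-\; \sum_{n\ge 0}(1-r)^{n+1}P_n.
\]

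The pivotal identity is $(1-r)^{n+1}P_n = P_n - P_{n+1}$, immediate from $P_{n+1} = P_n(1-(1-r)^{n+1})$. Telescoping the second sum yields $\sum_n (1-r)^{n+1}P_n = P_0 - \lim P_n = 1$, where the limit is taken formally in $\mathbb{Q}[[r,s]]$ and vanishes because $P_n$ is divisible by $r^n$. For the first sum, Abel summation together with $\frac{1}{(1-rs)^{m+1}}-\frac{1}{(1-rs)^m} = \frac{rs}{(1-rs)^{m+1}}$, and reabsorbing the leading $\frac{1}{1-rs}=1+\frac{rs\,P_0}{1-rs}$ as the $m=0$ term, produces
\[
\sum_{n\ge 0}\frac{P_n - P_{n+1}}{(1-rs)^{n+1}} \;=\; \frac{1}{1-rs} \;+\; \sum_{m\ge 1}\frac{rs\,P_m}{(1-rs)^{m+1}} \;=\; 1 \;+\; \sum_{m\ge 0}\frac{rs\,P_m}{(1-rs)^{m+1}}.
\]
Subtracting the $1$ coming from the telescoping sum delivers exactly the right-hand side of \eqref{eq-gen-min2}.

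The only delicate point is justifying the rearrangements in the formal power series ring, but this is routine: $P_n$ has $r$-order equal to $n$, which makes the limit $P_n\to 0$ and the convergence of each Abel-summed series automatic in the $(r,s)$-adic topology. Once this is observed, the derivation is a purely algebraic manipulation and the main work is simply keeping track of the telescoping boundary terms.
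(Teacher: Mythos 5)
Your proposal is correct and follows essentially the same route as the paper: both exploit duality to rewrite the series as $\GP(1,r,r,rs,rs)$, substitute into Corollary~\ref{cor-gen}, and then perform the same telescoping manipulation (your identity $(1-r)^{n+1}P_n=P_n-P_{n+1}$ is exactly the paper's splitting of $1+V_n(-r,-r)$ as $1-(-V_n(-r,-r))$ followed by reindexing). The only cosmetic difference is that you handle the constant $-1$ parts via a separate telescoping sum, whereas the paper cancels them inside the combined prefactor.
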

\begin{proof}
Since the dual of an interval order is also an interval order, $g_{k,\ell}$ is also equal to the number of interval orders with $k$ elements and $\ell$ minimal elements. Therefore, $\sum_{k,\ell} g_{k,\ell} r^k s^\ell=\GP(1,r,r,rs,rs)$, which is equal to
\begin{align*}
&\hphantom{=}\sum_{n\ge0}\!
\frac{1+V_n(-r,-r)}{1+V_n(-rs,-rs)}\left(-V_n(-rs,-rs)\right)\prod_{i=0}^{n-1}
\left(-V_i(-r,-r)\right) \displaybreak[0]\\
&=\!\sum_{n\ge0}\!
\frac{1-(1-rs)^{n+1}}{(1-rs)^{n+1}}\left(1+V_n(-r,-r)\right)\prod_{i=0}^{n-1}
\left(-V_i(-r,-r)\right) \displaybreak[0]\\
&=\!\sum_{n\ge0}\!
\frac{1-(1-rs)^{n+1}}{(1-rs)^{n+1}}\!\prod_{i=0}^{n-1}\left(-V_i(-r,-r)\right) -
\!\sum_{n\ge0}\! \frac{1-(1-rs)^{n+1}}{(1-rs)^{n+1}}
\prod_{i=0}^n\left(-V_i(-r,-r)\right)\\
&=\!\sum_{n\ge0}\!\left(\frac{1-(1-rs)^{n+1}}{(1-rs)^{n+1}}-\frac{1-(1-rs)^{n}}{
(1-rs)^{n}}\right)\prod_{i=0}^{n-1}\left(-V_i(-r,-r)\right),\\
\end{align*}
which simplifies to yield~\eqref{eq-gen-min2}.
\end{proof}

Let us now prove Theorem~\ref{thm-nesym}. Define
$\PP_k(v,w,x,y)=[t^k]\PP(t,v,w,x,y)$, that is, $\PP_k$ is the coefficient of
$t^k$ in $\PP$. We will state the proof in the language of
Fishburn matrices rather than in the equivalent language of interval orders. It
is thus convenient to view $\PP_k$ as the generating function of the primitive
$k\times k$ Fishburn matrices.

The next lemma provides the main idea in the proof of Theorem~\ref{thm-nesym}.

\begin{lemma}\label{lem-rekur}
For any $k\ge 1$, we have
\begin{equation}
\PP_{k+1}(v,w,x,y)=v\PP_k(v+w+vw,w,x+y+xy,y)-v\PP_k(v,w,x,y).\label{eq-rek}
\end{equation}
\end{lemma}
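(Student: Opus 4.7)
The plan is to prove the recurrence bijectively, realising each primitive $(k+1)\times(k+1)$ Fishburn matrix $M$ as a primitive $k\times k$ Fishburn matrix $N$ enriched with combinatorial data describing $M$'s last column.

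First I would define the map $M\mapsto N$ as follows: let $N$ be the $k\times k$ matrix with $N_{ij}=M_{ij}$ for $j<k$ and $N_{i,k}=M_{i,k}\vee M_{i,k+1}$ for $i\le k$. Row $k+1$ of $M$ contains only $M_{k+1,k+1}=1$, and the requirement that row $k$ of $M$ be nonzero forces $M_{k,k}\vee M_{k,k+1}=1$, so $N_{k,k}=1$; a routine verification then shows that $N$ is a primitive $k\times k$ Fishburn matrix. The inverse is parametric: each $1$-entry in the last column of $N$ may be ``refined'' into a pair $(M_{i,k},M_{i,k+1})\in\{(1,0),(0,1),(1,1)\}$, each $0$-entry of that column is forced to $(0,0)$, and $M_{k+1,k+1}=1$ is fixed. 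Every row of the reconstructed matrix $M$ is automatically nonzero; its column $k$ is also nonzero \emph{except} in the single ``forbidden'' refinement in which every $1$ of the last column of $N$ is sent to $(0,1)$.

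Next I would translate statistics. Cells of $N$ lying outside its last column keep the same region in $M$ as they did in $N$, so they still contribute $y$ (first row) or $w$ (rest). For a non-corner last-column entry $N_{i,k}$ with $i\ge 2$, the three refinements $(1,0),(0,1),(1,1)$ produce $M$-weights $w$ (since $M_{i,k}$ then lies in column $k$, in the rest region), $v$ (since $M_{i,k+1}$ lies in the last column), and $vw$ respectively; this is precisely the substitution $v\mapsto v+w+vw$ applied to the factor $v^{\max(N)}$. For the corner entry $N_{1,k}$ the three refinements instead produce $y,x,xy$, realising $x\mapsto x+y+xy$. Finally, the fixed entry $M_{k+1,k+1}=1$ supplies one extra overall factor of $v$.

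Putting this together, summing the weights over all primitive $k\times k$ Fishburn matrices $N$ and all refinements --- without imposing the column-$k$-nonzero constraint --- yields $v\,\PP_k(v+w+vw,w,x+y+xy,y)$. The forbidden refinements contribute, for each $N$, exactly $v\cdot v^{\max(N)}w^{\rest(N)}x^{\iso(N)}y^{\min(N)}$, and summing over $N$ gives $v\,\PP_k(v,w,x,y)$. Subtracting yields \eqref{eq-rek}. The main obstacle I anticipate is the careful region-tracking in the second step: one must verify that under each of the three refinements of $(M_{i,k},M_{i,k+1})$, the two cells land in exactly the regions of $M$ claimed, so that the weights $w,v,vw$ (respectively $y,x,xy$) combine into the substitutions above. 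Once that is pinned down, the recurrence is a clean inclusion-exclusion that removes the one forbidden refinement per $N$.
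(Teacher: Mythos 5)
Your proposal is correct and follows essentially the same argument as the paper: the paper defines the one-to-many refinement from a $k\times k$ matrix to $(k+1)\times(k+1)$ matrices (adding a bottom row with a single $1$ and splitting each $1$ of the old last column into one of $(1,0),(0,1),(1,1)$), tracks the statistics to get the substitutions $v\mapsto v+w+vw$, $x\mapsto x+y+xy$, and subtracts the one bad refinement per matrix whose column $k$ becomes zero. Your presentation of the merge map $M\mapsto N$ first, with the refinements as its fibers, is just the same bijection read in the opposite direction.
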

\begin{proof}
Let $\cMp_k$ denote the set of primitive $k\times k$ Fishburn matrices. We will 
describe an operation which from a given matrix $M\in\cMp_k$ produces a 
(typically not unique) new matrix $M'\in \cMp_{k+1}$.
The matrix $M'$ is created by adding to $M$ a new rightmost column and a new bottom row, and filling them according to these rules:
\begin{itemize}
\item $M'_{k+1,k+1}=1$, and all the other cells in row $k+1$ of $M'$ have value~0.
\item 
For every $j\le k$, if $M_{j,k}=0$, then $M'_{j,k}=M'_{j,k+1}=0$.
\item 
For every $j\le k$, if $M_{j,k}=1$ we choose one of the three possibilities
to fill $M'_{j,k}$ and $M'_{j,k+1}$: either $M'_{j,k}=0$ and
$M'_{j,k+1}=1$, or $M'_{j,k}=M'_{j,k+1}=1$, or $M'_{j,k}=1$ and $M'_{j,k+1}=0$. 
\item 
Any other cell has the same value in $M'$ as in~$M$.
\end{itemize}                                                              
If $M$ has $p$ 1-cells in column $k$, then the above operation can produce 
$3^p$ matrices~$M'$. All these $3^p$ matrices are upper-triangular, all of 
them have at least one 1-cell in each row, all of them have at least one 
1-cell in each column different from column $k$, and all except for exactly one of them have at least one 1-cell in column~$k$. In other words, for a given $M\in \cMp_k$ 
with $p$ 1-cells in column~$k$, the above operation produces $3^p-1$ matrices $M'$ 
from $\cMp_{k+1}$ (and one `bad' matrix not belonging to $\cMp_{k+1}$). It is not difficult to see that each matrix $M'\in \cMp_{k+1}$ can be 
created in this way from exactly one matrix $M\in \cMp_k$.

More generally, suppose that $M\in\cMp_k$ is a matrix with $\iso(M)\!=\!a$,
$\min(M)\!=\!b$, $\max(M)=c$ and $\rest(M)=d$, that is, $M$ contributes the
monomial $x^ay^bv^cw^d$ into $\PP_k(v,w,x,y)$. Then all the Fishburn matrices
produced from $M$ have generating function $v\left((x+y+xy)^a y^b (v+w+vw)^c
w^d-x^ay^bv^cw^d \right)$, where the leftmost factor of $v$ counts the 1-cell
$(k+1,k+1)$ of $M'$. Summing this expression over all $M\in \cMp_k$ gives the
recurrence from the statement of the lemma.
\end{proof}  
    
We remark the recursive procedure that generates matrices of $\cMp_{k+1}$ from
matrices of $\cMp_k$ is not new. In fact, this idea has already been used by
Haxell, McDonald, and Thomason~\cite{Haxell} to obtain
an efficient algorithm for the enumeration of interval orders. It is also very
closely related to the approach that Khamis~\cite{Khamis} has used to derive a
recurrence formula for the number of interval orders of a given size and height.

We now deduce Theorem~\ref{thm-nesym} from Lemma~\ref{lem-rekur} by simple
manipulation of series. Let us introduce the following notation: for any power
series $F(v,w,x,y)$, let $\sigma[F(v,w,x,y)]$ denote the power series 
$F(v+w+vw,w,x+y+xy,y)$, that is, $\sigma$ represents the substitution of
$v+w+vw$ for~$v$ and $x+y+xy$ for~$x$. Let $\sigma^{(i)}[F(v,w,x,y)]$ denote the
$i$-fold iteration 
of~$\sigma$; in other words, $\sigma^{(0)}[F]=F$ and for $i\ge 1$, 
$\sigma^{(i)}[F]=\sigma\left[\sigma^{(i-1)}[F]\right]$. It can be 
easily checked by induction that 
\[
\sigma^{(i)}[F(v,w,x,y)]=F(V_i(v,w),w,V_i(x,y),y).
\]

Writing $\PP_k$ for $\PP_k(v,w,x,y)$ and $\PP$ for $\PP(t,v,w,x,y)$, we can
rewrite \eqref{eq-rek} as $\PP_{k+1}=v\sigma[\PP_k]-v\PP_k$. Multiplying this by
$t^{k+1}$ and summing for all $k\ge 1$, we get
$
\PP-t\PP_1=tv\sigma[\PP]-tv\PP.
$ 
Since $\PP_1=x$, this simplifies into
\begin{equation}
\PP=\frac{tx}{1+tv}+\frac{tv}{1+tv}\sigma[\PP].\label{eq-rek2}
\end{equation}          

Substituting the right-hand side of this expression for the 
occurrence of $\PP$ on the right-hand side, we obtain
\begin{equation}
\PP=\frac{tx}{1+tv}+\frac{tv}{1+tv}\sigma\left[\frac{tx}{1+tv}\right]
+\frac{tv}{1+tv}\sigma\left[\frac{tv}{1+tv}\right]\sigma^{(2)}[\PP].\label{eq-rek3}
\end{equation}
We may again substitute the right-hand side of~\eqref{eq-rek2} into the right-hand side of~\eqref{eq-rek3}. In general, iterating this substitution $m$ times gives the identity
\begin{equation}
\PP=\left(\sum_{n=0}^m \sigma^{(n)}\left[\frac{tx}{1+tv}\right]\prod_{i=0}^{n-1}\sigma^{(i)}\left[\frac{tv}{1+tv}\right]\right)
+ \left(\prod_{i=0}^m\sigma^{(i)}\left[\frac{tv}{1+tv}\right]\right)\sigma^{(m+1)}[\PP].\label{eq-rekm}
\end{equation}
Since the rightmost summand of the right-hand side of~\eqref{eq-rekm} is $\cO(t^{m+1})$, we can take the limit as $m$ goes to infinity, to obtain
\begin{align*}
\PP&=\sum_{n\ge 0} \sigma^{(n)}\left[\frac{tx}{1+tv}\right]\prod_{i=0}^{n-1}\sigma^{(i)}\left[\frac{tv}{1+tv}\right]\\
&=\sum_{n\ge 0} t^{n+1}\frac{V_n(x,y)}{1+tV_n(v,w)}\prod_{i=0}^{n-1}\frac{V_i(v,w)}{1+tV_i(v,w)}.
\end{align*}
This proves Theorem~\ref{thm-nesym}.

\section{Self-Dual Interval Orders}\label{sec-sym}

Recall that a $k\times k$ Fishburn matrix $M$ represents a self-dual interval
order if and only if $M$ is invariant under transposition along the north-east
diagonal, or in other words, if for each $i,j$ the cell $(i,j)$ has the same
value as the cell $(k-j+1,k-i+1)$. We will say that the two cells $(i,j)$ and
$(k-j+1,k-i+1)$ form a \emph{symmetric pair}.

As in the previous section, we will first concentrate on enumerating the
primitive matrices, and the enumeration of general integer matrices is obtained as a consequence. Unless otherwise noted, the generating functions are only counting nonempty objects.

We distinguish 
three types of cells in a $k\times k$ matrix $M$: a cell $(i,j)$ is a
\emph{diagonal cell} if $i+j=k+1$, i.e., $(i,j)$ belongs to the north-east
diagonal of the matrix. If $i+j<k+1$ (i.e., $(i,j)$ is above and to the left of
the diagonal) then $(i,j)$ is a \emph{North-West cell}, or \emph{NW-cell},
while if $i+j>k+1$, then $(i,j)$ is an \emph{SE-cell}. The diagonal cells and 
SE-cells together uniquely determine a self-dual matrix. 

Apart of the statistics introduced in Section~\ref{sec-nonsym} (see
Table~\ref{tab-stats}), we will also consider three new statistics of a
matrix~$M$.
\begin{itemize}
\item $\dg(M)$ is the sum of all the diagonal cells except for the corner cell.
\item $\se(M)$ is the sum of all the SE-cells that do not belong to the last
column.
\item $\nw(M)$ is the sum of all the NW-cells that do not belong to the first
row.
\end{itemize}
In particular, the sum of all cells in $M$ is equal to
$\iso(M)+\min(M)+\max(M)+\se(M)+\nw(M)+\dg(M)$. In a self-dual matrix we of
course have $\min(M)=\max(M)$ and $\se(M)=\nw(M)$, so the above sum is equal to
$\iso(M)+\dg(M)+2\max(M)+2\se(M)$.

Notice that if $k>1$, then among all the $k\times k$ primitive self-dual
Fishburn matrices, exactly half have the corner cell filled with 1 and half have
the corner cell filed with 0. This is because changing the corner cell from 1 to
0 cannot create an empty row or empty column, since 
the cells $(1,1)$ and $(k,k)$ are always 1-cells and the value of the corner 
cell also does not affect the symmetry of the matrix. It
is simpler to first enumerate the symmetric matrices whose 
corner cell has the fixed value 1, and then use this result to 
obtain the full count, rather than to enumerate all the symmetric matrices at once.
 
Let $\cSp$ be the set of primitive self-dual Fishburn matrices whose corner
cell is equal to~1. Define the generating function $\SP(t,v,w,z)$ by
\[
\SP(t,v,w,z)=\sum_{M\in\cSp} t^{\mg(M)} v^{\max(M)} w^{\se(M)} 
z^{\dg(M)}.
\]  
The next theorem is the key result of this section.
\begin{theorem}\label{thm-sym} The generating function $\SP(t,v,w,z)$ is equal
to
\begin{equation}                                                   
\sum_{n\ge 0} t^{2n+1}
\frac{1+tV_n(v,w)}{1+t^2V_n(v,w)}(1+z)^n(1+v)^n
(1+w)^{\binom{n}{2}}\prod_{i=0}^{n-1}
\frac{V_i(v,w)}{1+t^2V_i(v,w)}.\label{eq-sym}
\end{equation}  
\end{theorem}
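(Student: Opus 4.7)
The plan is to parallel the strategy of Section~\ref{sec-nonsym}: derive a recurrence relating $\SP_k(v,w,z) := [t^k]\SP(t,v,w,z)$ to $\SP_{k-2}(v,w,z)$ via a growth operation on primitive self-dual Fishburn matrices, translate it into a functional equation for $\SP$, and iterate to obtain~\eqref{eq-sym} in closed form. Because self-duality pairs the two ends of the matrix, the natural growth step adds two to the dimension rather than one, which is consistent with the leading power $t^{2n+1}$ and the $t^2 V_i(v,w)$ denominators appearing in the formula.

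The core combinatorial step is to produce each $M' \in \cSp_{k+2}$ exactly once from a matrix $M \in \cSp_k$ equipped with a choice of ``split data''. I would embed the interior of $M$ (the cells away from its first row and last column) into rows/columns $2,\ldots,k+1$ of $M'$, place the three forced $1$-cells at $(1,1)$, $(k+2,k+2)$, and the corner $(1,k+2)$, and then redistribute the entries of $M$'s last column between the two new columns $k+1$ and $k+2$ of $M'$ using the three-way split of Lemma~\ref{lem-rekur}. By NE symmetry this split forces a mirrored redistribution of $M$'s first row between the new rows $1$ and $2$, so the two splits are coupled; after discarding exactly one ``bad'' split per $M$, the operation should produce each element of $\cSp_{k+2}$ uniquely. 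Each $1$-entry in $M$'s last column then has its $v$-weight replaced by $v+w+vw=V_1(v,w)$, realizing the substitution $\sigma$ restricted to the $(v,w)$-variables. The growth step also introduces new free cells on the outer shell of $M'$: one new diagonal cell contributing a factor $(1+z)$, one new last-column cell contributing $(1+v)$, and a growing number of new independent SE-cells (those in positions previously inside $M$ that become SE after embedding), contributing $(1+w)^{i-1}$ at the $i$-th iteration.

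Translating to a functional equation of shape $\SP = (\text{initial term}) + (\text{operator}) \cdot \tilde\sigma[\SP]$, where $\tilde\sigma$ sends $v \mapsto v+w+vw$ and fixes $w,z$, and iterating as in~\eqref{eq-rek2}--\eqref{eq-rekm}, the $n$-th summand will accumulate $V_n(v,w) = \tilde\sigma^{(n)}[v]$ in the numerator; a cumulative boundary factor $(1+z)^n (1+v)^n (1+w)^{\binom{n}{2}}$ (the triangular exponent coming from $\sum_{i=0}^{n-1} i = \binom{n}{2}$); denominators $1+t^2 V_i(v,w)$ from the telescoping geometric expansion, with $t^2$ reflecting the stepsize-$2$ growth; and the outermost factor $(1+tV_n)/(1+t^2 V_n)$ from the base case of the recursion. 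The hardest part will be getting the coupled splitting right: unlike in Lemma~\ref{lem-rekur}, the splits of the last column and first row are linked by NE symmetry rather than free, so I would need to verify carefully that this coupled operation is still a bijection onto $\cSp_{k+2}$ with exactly one discarded split. A further subtlety is verifying the exact count of newly independent SE-cell pairs at each step (which produces the $\binom{n}{2}$ exponent), and checking that the diagonal cells --- which are fixed rather than paired by NE symmetry --- are correctly tracked by the $z$-variable throughout the recursion.
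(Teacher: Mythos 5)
Your proposal takes essentially the same route as the paper: the paper's Lemma~\ref{lem-symrekur} establishes exactly the recurrence $\SP_{k+2}(v,w,z)=v(1+v)(1+z)\SP_k(v+w+vw,w,z)-v\SP_k(v,w,z)$ via the two-step growth operation you describe (coupled three-way splits of the last column mirrored by NE symmetry in the first row, exactly one discarded degenerate matrix per $M$, and the two free cells arising from the old corner giving the factor $(1+v)(1+z)$), and then iterates the resulting functional equation just as in Section~\ref{sec-nonsym}. The only cosmetic difference is that the paper obtains the factor $(1+w)^{\binom{n}{2}}$ algebraically from the identity $1+V_i(v,w)=(1+v)(1+w)^i$ applied to the accumulated $(1+V_i)$ factors, rather than by directly tracking newly independent SE-cells at each step.
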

The comments in Remark~\ref{rem-converg} apply to the
expression~\eqref{eq-sym} as well.

Before we prove Theorem~\ref{thm-sym}, we first state some of its consequences.
Although Theorem~\ref{thm-sym} provides all the information we need for our
enumerations, it is often more convenient to work with the closely related
generating function that counts all Fishburn matrices, rather than just those
that belong to~$\cSp$. Let $\cS$ be the set of primitive self-dual Fishburn
matrices, and define
\[
 \SPP(t,v,w,x,z)=\sum_{M\in\cS} t^{\mg(M)} v^{\min(M)+\max(M)}
w^{\se(M)+\nw(M)} x^{\iso(M)} z^{\dg(M)}.
\]

\begin{lemma}\label{lem-spp}
 The generating function $\SPP$ satisfies the identity 
\begin{equation}
 \SPP(t,v,w,x,z)=(1+x)\SP(t,v^2,w^2,z)-t.\label{eq-spp}
\end{equation}
Consequently, $\SPP(t,v,w,x,z)$ is equal to
\begin{align*}
\begin{split}
  -t+(1+x)\sum_{n\ge 0} t^{2n+1}
\frac{1+tV_n(v^2,w^2)}{1+t^2V_n(v^2,w^2)}(1+z)^n(1+v^2)^n
(1+w^2)^{\binom{n}{2}}\\
\times\prod_{i=0}^{n-1}
\frac{V_i(v^2,w^2)}{1+t^2V_i(v^2,w^2)}.
\end{split}
\end{align*}
\end{lemma}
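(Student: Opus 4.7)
The plan is to prove~\eqref{eq-spp} by partitioning $\cS$ according to the value of the corner cell, and then obtain the explicit formula by substituting Theorem~\ref{thm-sym}. Self-duality forces $\min(M)=\max(M)$ and $\nw(M)=\se(M)$ for every $M\in\cS$, so the contribution of a matrix $M\in\cSp$ to $\SPP(t,v,w,x,z)$ is exactly $t^{\mg(M)} v^{2\max(M)} w^{2\se(M)} x\, z^{\dg(M)}$, which is obtained from its contribution $t^{\mg(M)}v^{\max(M)}w^{\se(M)}z^{\dg(M)}$ to $\SP(t,v,w,z)$ by the substitution $v\mapsto v^2$, $w\mapsto w^2$ and multiplication by~$x$.

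Next I would use the corner-cell argument already recalled before the statement of Theorem~\ref{thm-sym}. For $k\ge 2$, toggling the corner cell of a matrix $M\in\cS$ of magnitude $k$ flips the value of $\iso(M)$ between $0$ and~$1$ while preserving primitivity, self-duality, and every other statistic; this is because the corner cell is self-paired under the NE-diagonal transposition, and row~$1$ and column~$k$ still contain the forced $1$-entries at positions $(1,1)$ and $(k,k)$. This gives an involution that matches $\cSp_k$ with its complement $\cSn_k := \cS_k\setminus\cSp_k$, and the contribution to $\SPP$ of a matrix of $\cSn_k$ is obtained from the contribution of its partner in $\cSp_k$ by setting $x=1$ (i.e., dropping the factor~$x$). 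Summing over $k\ge 2$, we get
\[
\sum_{k\ge 2}\sum_{M\in\cS_k}(\text{contribution to }\SPP) = (1+x)\sum_{k\ge 2}\sum_{M\in\cSp_k}t^{\mg(M)}v^{2\max(M)}w^{2\se(M)}z^{\dg(M)}.
\]
The only remaining case is $k=1$, where the unique matrix is $[1]\in\cSp$, contributing $tx$ to $\SPP$ and $t$ to $\SP$. Putting things together,
\[
\SPP(t,v,w,x,z) = tx + (1+x)\bigl(\SP(t,v^2,w^2,z)-t\bigr) = (1+x)\SP(t,v^2,w^2,z) - t,
\]
which is precisely~\eqref{eq-spp}.

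For the explicit formula at the end of the lemma, I would simply substitute the expression for $\SP(t,v,w,z)$ supplied by Theorem~\ref{thm-sym}, replacing $v$ by $v^2$ and $w$ by $w^2$ throughout, and then subtract~$t$. The only step that required real thought is the corner-cell bookkeeping; everything else is formal manipulation. The mild obstacle is making sure the $k=1$ term is treated separately, since the two-to-one corner-cell involution fails there, and this is exactly the source of the $-t$ correction in~\eqref{eq-spp}.
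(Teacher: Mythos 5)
Your proof is correct and follows essentially the same route as the paper: the corner-cell toggling involution (with the $1\times1$ case treated separately, which is exactly the source of the $-t$ correction), combined with the observations $\min(M)=\max(M)$ and $\nw(M)=\se(M)$ for self-dual $M$ to justify the substitutions $v\mapsto v^2$, $w\mapsto w^2$, and then a direct substitution of Theorem~\ref{thm-sym}. One cosmetic remark: the symbol $\cSn_k$ you introduce for $\cS_k\setminus\cSp_k$ clashes with the paper's later use of $\cSn$ for self-dual matrices with all-zero north-east diagonal, but this does not affect the argument.
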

\begin{proof}
The factor $(1+x)$ on the right-hand side of~\eqref{eq-spp} corresponds to
the fact that each primitive self-dual matrix either belongs to $\cSp$ or is
obtained from a matrix in $\cSp$ by changing its corner cell from 1 to 0. The
subtracted $t$ accounts for the fact that the $1\times1$ matrix containing 0 is
not a Fishburn matrix, even though it can be obtained from a matrix in $\cSp$ by
changing the corner cell. The substitutions into $\SP$ correspond
straightforwardly to the fact that $v^{\min(M)+\max(M)}=v^{2\max(M)}$ for any
self-dual matrix, and similarly for~$w$.
\end{proof}

\begin{corollary}
 Let $s_m$ be the number of self-dual primitive interval orders on $m$ elements, with $s_0=1$. Then
\begin{align*}
 \sum_{m\ge 0} s_m x^m&=1+\SPP(1,x,x,x)=\sum_{n\ge
0}(1+x)^{n+1}\prod_{i=0}^{n-1}\left(
(1+x^2)^{i+1}-1\right).\label{eq-sym-vel}
\end{align*}
\end{corollary}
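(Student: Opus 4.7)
The plan is to derive the formula by specializing the expression for $\SPP(t,v,w,x,z)$ from Lemma~\ref{lem-spp}. For any self-dual Fishburn matrix $M$ we have $\min(M)=\max(M)$ and $\nw(M)=\se(M)$, so the size of the corresponding interval order equals $\iso(M)+\dg(M)+2\max(M)+2\se(M)$. Hence the size generating function over nonempty self-dual primitive interval orders is exactly $\SPP(1,x,x,x,x)$: set $t=1$ and all remaining statistic-tracking variables to the single size variable $x$. The $+1$ on the left-hand side accounts for the empty order ($s_0=1$), which is not counted by $\SPP$ since that function was defined to sum over nonempty matrices only.

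With the substitution fixed, I would carry out the algebraic simplification. Setting $t=1$ collapses the rational factor $(1+tV_n(v^2,w^2))/(1+t^2V_n(v^2,w^2))$ in Lemma~\ref{lem-spp} to $1$ and kills all powers of $t$ apart from the $-t$ outside the sum. What remains inside each summand has the prefactor $(1+v^2)^n(1+w^2)^{\binom{n}{2}}$ multiplied by $\prod_{i=0}^{n-1}V_i(v^2,w^2)/(1+V_i(v^2,w^2))$. The key identity is $1+V_i(a,b)=(a+1)(b+1)^i$, which is immediate from the definition $V_i(a,b)=(a+1)(b+1)^i-1$. Taking the product over $i=0,\dots,n-1$ telescopes to $\prod_{i=0}^{n-1}(1+V_i(v^2,w^2))=(1+v^2)^n(1+w^2)^{\binom{n}{2}}$, so the prefactor cancels the denominators exactly, leaving simply $\prod_{i=0}^{n-1}V_i(v^2,w^2)$.

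The final substitution $v=w=z=x$ gives $V_i(x^2,x^2)=(1+x^2)^{i+1}-1$, while the external factor $(1+x)$ and the factor $(1+z)^n=(1+x)^n$ combine into $(1+x)^{n+1}$. The residual $-t=-1$ from Lemma~\ref{lem-spp} cancels the $+1$ on the left-hand side that accounts for the empty order, producing the claimed closed form. This is really just careful bookkeeping; there is no genuine obstacle, since the prefactor $(1+v^2)^n(1+w^2)^{\binom{n}{2}}$ appearing in Lemma~\ref{lem-spp} was tailor-made to cause exactly this cancellation at $t=1$, and every other simplification is forced by the definition of $V_i$.
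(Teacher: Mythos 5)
Your derivation is correct and is exactly the computation the paper intends (the corollary is stated without proof as a direct specialization of Lemma~\ref{lem-spp}): substituting $t=1$, $v=w=x=z$, using $1+V_i(a,b)=(a+1)(b+1)^i$ so that $\prod_{i=0}^{n-1}\bigl(1+V_i(v^2,w^2)\bigr)=(1+v^2)^n(1+w^2)^{\binom{n}{2}}$ cancels the prefactor, and letting the $-t=-1$ absorb the empty order. You also correctly read the paper's $\SPP(1,x,x,x)$ as the five-argument substitution $\SPP(1,x,x,x,x)$.
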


\begin{corollary}\label{cor-sym-mag}
 Let $r_m$ be the number of primitive self-dual $m\times m$ Fishburn matrices.
Then 
\begin{align*}
 \sum_{m\ge 1} r_m t^m&=\SPP(t,1,1,1)\\ 
&=-t+\sum_{n\ge 0}
2^{\binom{2n+2}{2}}t^{2n+1}\frac{1+(2^{n+1}-1)t}{1+(2^{n+1}-1)t^2}\prod_{i=0}
^{n-1}\frac{2^{i+1}-1}{1+(2^{i+1}-1)t^2}.
\end{align*}
\end{corollary}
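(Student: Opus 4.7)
The corollary is a direct specialization of Lemma \ref{lem-spp}, so the plan is simply to evaluate its closed-form expression at $v=w=x=z=1$. By definition $r_m$ counts primitive self-dual $m\times m$ Fishburn matrices with no refined weight attached, so $\sum_{m\ge 1} r_m t^m = \SPP(t,1,1,1,1)$, and there is no new enumeration or bijection to carry out beyond this substitution.

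First I would evaluate the polynomial $V_n(v^2,w^2)$ at $v=w=1$: since $V_n(a,b)=(a+1)(b+1)^n-1$, one gets $V_n(1,1)=2\cdot 2^n-1 = 2^{n+1}-1$. This immediately turns every occurrence of $V_i(v^2,w^2)$ or $V_n(v^2,w^2)$ in Lemma \ref{lem-spp} into $2^{i+1}-1$ or $2^{n+1}-1$, producing the factors $1+(2^{n+1}-1)t$, $1+(2^{n+1}-1)t^2$, and the product $\prod_{i=0}^{n-1}\frac{2^{i+1}-1}{1+(2^{i+1}-1)t^2}$ visible on the right-hand side of the claimed identity.

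Next I would collect the remaining prefactor $(1+x)(1+v^2)^n(1+z)^n(1+w^2)^{\binom{n}{2}}$. At $v=w=x=z=1$ this becomes $2\cdot 2^n\cdot 2^n\cdot 2^{\binom{n}{2}} = 2^{1+2n+\binom{n}{2}}$, which accounts for the power of~$2$ in front of~$t^{2n+1}$ in the stated formula. The free summand $-t$ coming from Lemma \ref{lem-spp} survives unchanged, explaining the leading $-t$.

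The only point requiring any care is bookkeeping: one must carry the substitution $v\mapsto v^2$, $w\mapsto w^2$ from Lemma \ref{lem-spp} through every factor inherited from Theorem \ref{thm-sym} \emph{before} evaluating at $v=w=1$; in particular the exponent $\binom{n}{2}$ comes from the $(1+w)^{\binom{n}{2}}$ factor in Theorem \ref{thm-sym}, not from any manipulation of $t$. There is no combinatorial or analytic difficulty beyond elementary algebra, and the corollary follows at once from the two already-established results.
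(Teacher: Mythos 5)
Your route is exactly the intended one: the corollary is nothing more than the substitution $v=w=x=z=1$ into Lemma~\ref{lem-spp}, and your evaluation $V_n(1,1)=2^{n+1}-1$, together with your treatment of the denominators and of the product over $i$, is correct. The genuine problem is your final identification of the prefactor. You correctly compute that $(1+x)(1+z)^n(1+v^2)^n(1+w^2)^{\binom{n}{2}}$ evaluates to $2^{1+2n+\binom{n}{2}}$, but you then assert that this ``accounts for'' the factor $2^{\binom{2n+2}{2}}$ appearing in the statement. It does not: $1+2n+\binom{n}{2}=\binom{n+2}{2}$, whereas $\binom{2n+2}{2}=(n+1)(2n+1)$; already at $n=1$ these exponents are $3$ versus $6$. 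So either your computation or the printed statement must be wrong, and a proof cannot pass over that discrepancy silently.

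In fact your computation is the correct one and the printed exponent is a typo. A direct check: a primitive self-dual $3\times3$ Fishburn matrix is forced to have $M_{11}=M_{33}=1$, has $M_{12}=M_{23}=a$ and $M_{22}=c$ subject only to $a+c\ge1$, and has $M_{13}$ free, so $r_3=6$; the series with exponent $\binom{n+2}{2}$ gives $[t^3]=-2+2^{3}=6$, while the printed $\binom{2n+2}{2}$ gives $-2+2^{6}=62$. The right conclusion of your argument is therefore that the coefficient of $t^{2n+1}$ carries the factor $2^{\binom{n+2}{2}}$, and that the statement as printed should read $\binom{n+2}{2}$ in place of $\binom{2n+2}{2}$ (and $\SPP(t,1,1,1,1)$ in place of $\SPP(t,1,1,1)$, a harmless omission you already corrected implicitly).
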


Let $\SGP(t,v,w,x,z)$ be the generating function of (not necessarily primitive)
self-dual interval orders, with variables having the same meaning as in
$\SPP(t,v,w,x,z)$. Clearly, a Fishburn matrix $M$ representing a self-dual
interval order may be obtained in a unique way from a matrix $M'$ representing a
primitive self-dual interval order, by changing each diagonal 1-cell of $M'$
into an arbitrary non-zero cell, and by changing a symmetric pair of
non-diagonal 1-cells of $M'$ into a pair of nonzero cells having the same value.
Repeating the reasoning of Lemma~\ref{lem-spp}, we get the identity
\[
 \SGP(t,v,w,x,z)=\frac{1}{1-x}\SP\left(t,\frac{v^2}{1-v^2},\frac{w^2}{1-w^2},
\frac{z}{1-z}\right)-t.
\]
It follows that $\SGP(t,v,w,x,z)$ is equal to
\[
-t+\sum_{n\ge 0}\frac{t^{2n+1}\left( 1+(1-t)V_n(-v^2,-w^2) \right)\prod_{i=0}^{n-1}\frac{-V_i(-v^2,-w^2) }{1+( 1-t^2)V_i(-v^2,-w^2) }}{(1-x)(1-z)^n (1-v^2)^n(1-w^2)^{\binom{n}{2}}\left(1+(1-t^2)V_n(-v^2,-w^2) \right)}.
\]

\begin{corollary}\label{cor-sym-gen}
Let $g_m$ be the number of self-dual interval orders on $m$ elements, with $g_0=1$. Then
\begin{align*}
 \sum_{m\ge 0} g_m
x^m&=1+\SGP(1,x,x,x,x)\\
&=\sum_{n\ge 0}\frac{1}{(1-x^2)^{\binom{n+1}{2}}(1-x)^{n+1}}\prod_{i=0}^{n-1}
\left(1-(1-x^2)^{i+1}\right)\\
&=\sum_{n\ge0}\frac{1}{(1-x)^{n+1}}\prod_{i=0}^{n-1}\left(\frac{1}{(1-x^2)^{
i+ 1}} -1\right).
\end{align*}
\end{corollary}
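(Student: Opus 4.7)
The plan is to specialize the explicit formula for $\SGP(t,v,w,x,z)$ displayed immediately above the corollary to $t=v=w=x=z$, the substitution under which all five statistics (magnitude, $\min+\max$, $\se+\nw$, $\iso$, and $\dg$) collapse onto a single total weight equal to the size of the interval order. Since every nonempty self-dual interval order has positive magnitude, the resulting series equals $\sum_{m\ge 1}g_m x^m$, and adding $1$ for the empty order supplies the $g_0=1$ term on the left-hand side.

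The first step is to set $t=1$. The factor $1-t^2$ that sits in every denominator $1+(1-t^2)V_i(-v^2,-w^2)$ vanishes, so those denominators all collapse to $1$, and likewise the numerator factor $1+(1-t)V_n(-v^2,-w^2)$ becomes $1$. What remains is
$$\SGP(1,v,w,x,z)=-1+\sum_{n\ge0}\frac{\prod_{i=0}^{n-1}\bigl(-V_i(-v^2,-w^2)\bigr)}{(1-x)(1-z)^n(1-v^2)^n(1-w^2)^{\binom{n}{2}}},$$
where the initial $-1$ will eventually cancel against the $+1$ added for the empty order. I then set $v=w=x=z$. In the numerator, $-V_i(-x^2,-x^2)=1-(1-x^2)^{i+1}$. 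In the denominator, the identity $n+\binom{n}{2}=\binom{n+1}{2}$ merges $(1-x^2)^n(1-x^2)^{\binom{n}{2}}$ into $(1-x^2)^{\binom{n+1}{2}}$, while $(1-x)\cdot(1-x)^n=(1-x)^{n+1}$. This yields the first claimed expression.

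For the second form, I use $\binom{n+1}{2}=\sum_{i=0}^{n-1}(i+1)$ to rewrite $(1-x^2)^{\binom{n+1}{2}}=\prod_{i=0}^{n-1}(1-x^2)^{i+1}$ and pull this product inside the numerator product. Each matched pair becomes $\bigl(1-(1-x^2)^{i+1}\bigr)/(1-x^2)^{i+1}=1/(1-x^2)^{i+1}-1$, while the factor $(1-x)^{n+1}$ stays outside, exactly as required.

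The only real subtlety, and the one place to pause, is justifying the substitution $t=1$ into an expression whose summands superficially contain $1/\bigl(1+(1-t^2)V_i(-v^2,-w^2)\bigr)$ factors. This is handled by the analogue of Remark~\ref{rem-converg}: after the substitution, each summand becomes a genuine power series in $x$ whose $x$-order grows with $n$, because each of the $n$ numerator factors $1-(1-x^2)^{i+1}$ already has order at least $2$ in $x$. Hence the sum converges coefficientwise in $\mathbb{Z}[[x]]$ and the identity is legitimate. Beyond this, the derivation is routine algebraic simplification.
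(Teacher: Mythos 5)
Your proposal is correct and follows exactly the route the paper intends: the corollary is stated as a direct specialization of the displayed formula for $\SGP(t,v,w,x,z)$ at $t=1$, $v=w=x=z$, and your algebraic simplifications (including the $\binom{n+1}{2}=n+\binom{n}{2}$ merge and the convergence justification via the order-$2n$ vanishing of the $n$-th summand) match what the paper leaves implicit.
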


The proof of Theorem~\ref{thm-sym} is based on the same general
idea as the proof of Theorem~\ref{thm-nesym}.
Let us define $\SP_k(v,w,z)=[t^k]\SP(t,v,w,z)$. The next lemma is the self-dual
analogue of Lemma~\ref{lem-rekur}.

\begin{lemma}\label{lem-symrekur}
For any $k\ge 1$, we have
\begin{equation}
\SP_{k+2}(v,w,z)=v(1+v)(1+z)\SP_k(v+w+vw,w,z)-v\SP_k(v,w,z).\label{eq-symrekur}
\end{equation}
\end{lemma}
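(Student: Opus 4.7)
The plan is to adapt the proof of Lemma~\ref{lem-rekur} to the self-dual setting, producing a weighted construction that extends each $M\in\cSp$ of magnitude $k$ into a family of candidate matrices $M'$ of magnitude $k+2$. The forced cells of $M'$ are the corner $M'_{1,k+2}=1$ and the outer symmetric pair $M'_{1,1}=M'_{k+2,k+2}=1$ (required by upper-triangularity together with the Fishburn condition on row $k+2$ and column $1$). The strict interior of $M$, i.e.\ the positions $(i,j)$ with $2\le i\le j\le k-1$, is embedded into $M'$ via the shift $(i,j)\mapsto(i+1,j+1)$; this shift sends the north-east diagonal of $M$ onto that of $M'$, so both self-dualities are compatible with the embedding.

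The remaining entries of $M'$ come from three independent sources of choice. First, for each of the $\max(M)$ non-corner $1$-cells $M_{i,k}$ in $M$'s last column, we make a three-way split exactly as in the proof of Lemma~\ref{lem-rekur}, placing a $1$ at $M'_{i+1,k+1}$, at $M'_{i+1,k+2}$, or at both. The symmetric partners of the chosen cells, which lie in rows $1$ and $2$, are determined automatically by $M'$-self-duality, and these automatic assignments coincide with the splits that $M$-self-duality forces on the partner cell $M_{1,k+1-i}$ in $M$'s first row---so a single self-dual choice respects both symmetries. Second, there is a binary choice for the outer symmetric pair $(M'_{1,k+1},M'_{2,k+2})$. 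Third, there is a binary choice for the self-paired diagonal cell $M'_{2,k+1}$. A careful check of indices shows that these four sources (forced, embedded, split, free) populate the entries of $M'$ without overlap, so $M'$ is automatically upper-triangular, self-dual, and primitive. The only way $M'$ can fail to be a Fishburn matrix is that row $2$ (equivalently column $k+1$) becomes empty; by inspection this happens exactly when every split places its $1$ only in column $k+2$ and both binary choices are $0$.

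For the weight count, each split contributes $v$, $w$, or $vw$ according to which cells are set, so the $\max(M)$ splits together contribute $(v+w+vw)^{\max(M)}$---exactly the substitution $v\mapsto v+w+vw$ applied to the $v$-variable of $\SP_k$. The two binary choices contribute the factors $(1+v)$ and $(1+z)$; the forced outer pair contributes a fixed $v$; and the embedding contributes $w^{\se(M)}z^{\dg(M)}$. Summing the weight of all candidates over $M\in\cSp$ gives $v(1+v)(1+z)\SP_k(v+w+vw,w,z)$, while the unique bad candidate per $M$ contributes weight $v\cdot v^{\max(M)}w^{\se(M)}z^{\dg(M)}$, which sums to $v\SP_k(v,w,z)$; subtracting yields~\eqref{eq-symrekur}. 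Bijectivity is verified by the inverse map $M_{i,k}:=M'_{i+1,k+1}\vee M'_{i+1,k+2}$ for $i=2,\dots,k$, with the interior of $M$ recovered directly from $M'$'s interior. The subtlest verification, and the likely main obstacle, is that $M'$-self-duality sends the two split cells of $M_{i,k}$ to exactly the two cells whose values would be dictated by splitting $M_{1,k+1-i}$, so a single consistent choice indeed suffices for each symmetric pair of $M$.
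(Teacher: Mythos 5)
Your construction is, up to a shift of indices, exactly the paper's proof: you extend $M\in\cSp_k$ by an outer border with the same forced cells ($M'_{1,1}=M'_{k+2,k+2}=1$ and the new corner), the same three-way split of the $\max(M)$ non-corner $1$-cells of the last column contributing $v+w+vw$ each, the same two binary choices contributing $(1+v)$ and $(1+z)$, the NW part filled by self-duality, and exactly one bad matrix (empty row $2$/column $k+1$) subtracted, giving $4\cdot 3^{\max(M)}$ candidates per $M$ and the stated recurrence. The proposal is correct and takes essentially the same approach as the paper.
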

\begin{proof}
Let $\cSp_k$ be the set of matrices of $\cSp$ of size $k\times k$. We 
will show how a given matrix $M\in\cSp_k$ can be extended into a matrix 
$M'\in\cSp_{k+2}$. Assume, just for the sake of this proof, that matrices in
$\cSp_k$ have rows and columns 
indexed by $1,2,\dotsc,k$, while matrices in $\cSp_{k+2}$ have rows and 
columns indexed by $0,1,\dotsc,k+1$. Thus, if a cell $(i,j)$ is a diagonal 
cell in~$M\in\cSp_k$, then $(i,j)$ is also a diagonal cell in~$M'\in\cSp_{k+2}$, and similarly for SE-cells and NW-cells.      

The matrix $M'$ is created from $M$ by adding a new left-most and right-most
row, and a new top-most and bottom-most column, and then filling the new cells
by these rules: 
\begin{itemize}
\item 
$M'_{k+1,k+1}=1$, and any
other cell in row $k+1$ of $M'$ has value~0.
\item 
$M'_{0,k+1}=1$. Note that the cell $(0,k+1)$ is the corner cell of $M'$.
\item
$M'_{1,k}$ is filled arbitrarily by 0 or 1, and $M'_{1,k+1}$ is filled arbitrarily by 0 or 1 as well. (Recall that $M_{1,k}=1$ by the definition of $\cSp$.)
\item For any $j\in\{2,\dotsc,k\}$, if $M_{j,k}=0$,
then $M'_{j,k}=M'_{j,k+1}=0$.         
\item For any $j\in\{2,\dotsc,k\}$, if $M_{j,k}=1$,
we choose one of three possibilities to fill $M'_{j,k}$ and $M'_{j,k+1}$: either $M'_{j,k}=0$ and
$M'_{j,k+1}=1$, or $M'_{j,k}=M'_{j,k+1}=1$, or $M'_{j,k}=1$ and $M'_{j,k+1}=0$.
\item Any other SE-cell or diagonal cell of $M'$ has the same value in $M'$ as
in $M$, and the NE-cells of $M'$ are filled in order to form a self-dual matrix.
\end{itemize}                                                       
From a given matrix $M\in\cSp_k$ with $\max(M)=p$, this procedure creates
$4\cdot 3^p$ distinct self-dual matrices of size 
$(k+2)\times (k+2)$. Of these $4\cdot 3^p$ matrices, all belong to 
$\cSp_{k+2}$ except for one matrix, which has column~$k$ (and hence also
row~$k$) filled with zeros.

In terms of generating functions, if $M$ is a matrix that 
contributes a monomial $v^pw^qz^r$ into the generating function $\SP_k(v,w,z)$, 
then the matrices in $\cSp_{k+2}$ created from $M$ have generating function

\begin{multline}
\sum_{\substack{M'\in\cSp_{k+2}\\M'\text{ obtained from }M}}
v^{\max(M')}w^{\se(M')}z^{\dg(M')}=\\
=v\bigl( (1+v)(1+z)(v+w+vw)^pw^qz^r-v^pw^qz^r\bigr).\label{eq-rek-lemma}
\end{multline}
It is easy to see that each matrix $M'\in\cSp_{k+2}$ can be generated by the
above rules from a unique matrix $M\in\cSp_{k}$.
By summing the identity \eqref{eq-rek-lemma} over all $M\in\cSp_k$, we obtain
the identity~\eqref{eq-symrekur}.
\end{proof}           

To prove Theorem~\ref{thm-sym} from Lemma~\ref{lem-symrekur}, we imitate the
proof of Theorem~\ref{thm-nesym} from Lemma~\ref{lem-rekur}. Let us write $\SP$
instead of $\SP(t,v,w,z)$, and $\SP_k$ instead of $\SP_k(v,w,z)$. Multiplying
\eqref{eq-symrekur} by $t^{k+2}$ and summing for all $k\ge 1$ gives 
\begin{equation*}
\SP-t\SP_1-t^2\SP_2=t^2v(1+v)(1+z)\sigma[\SP] - t^2v\SP.
\end{equation*}                                                                 
                
Since $\SP_1=1$ and $\SP_2=v$, this gives
\begin{equation}
\SP=\frac{t+t^2v}{1+t^2v}+\frac{t^2v(1+v)(1+z)}{1+t^2v}\sigma[\SP].
\label{eq-symrek2}
\end{equation}                                                                 
       
Repeatedly substituting for $\SP$ on the right-hand side of~\eqref{eq-symrek2},
and writing $V_n$ instead of $V_n(v,w)$ to save space,
we get for each $m\ge 0$ the identity
\begin{align*}
\SP&=\left(\sum_{n=0}^m \sigma^{(n)}\left[ \frac{t+t^2v}{1+t^2v}
\right]\prod_{i=0}^{n-1}
\sigma^{(i)}\left[\frac{t^2v(1+v)(1+z)}{1+t^2v}\right]\right)\\
 &\shoveright{\qquad\qquad+\prod_{i=0}^{m}
\sigma^{(i)}\left[\frac{t^2v(1+v)(1+z)}{1+t^2v}\right]\sigma^{(m+1)}[\SP]}\\
&=\sum_{n=0}^m t^{2n+1}\frac{1+tV_n}{1+t^2V_n}
\prod_{i=0}^{n-1}
\frac{(1+V_i)(1+z)V_i}{1+t^2V_i}+\cO(t^{2m+2})\\
&=\sum_{n=0}^m t^{2n+1}\frac{1+tV_n}{1+t^2V_n}
\prod_{i=0}^{n-1}
\frac{(1+v)(1+w)^i(1+z)V_i}{1+t^2V_i}+\cO(t^{2m+2})\\
&=\sum_{n=0}^m t^{2n+1}\frac{1+tV_n}{1+t^2V_n}(1+v)^n(1+z)^n(1+w)^{\binom{n}{2}}
\prod_{i=0}^{n-1}
\frac{V_i}{1+t^2V_i}+\cO(t^{2m+2}).
\end{align*}

Taking the limit as $m$ goes to infinity proves Theorem~\ref{thm-sym}.

\section{Self-Dual Orders Counted by Reduced Size}\label{sec-red}

So far, the main statistic of a Fishburn matrix has always been its size, i.e., 
the sum of its entries. However, for a self-dual Fishburn matrix $M$, we may
consider an alternative notion of size, which we call \emph{the reduced
size of $M$} and denote by $\rs(M)$, and which we define as the sum of all the
diagonal and south-east cells of the matrix. Note that the diagonal and
south-east cells determine a self-dual matrix uniquely. With the notation of the
previous section, $\rs(M)$ is equal to
$\dg(M)+\se(M)+\max(M)+\iso(M)$.

It turns out that the notion of reduced size is in some respects a more natural
statistic of self-dual interval orders than the notion of size that we have
used so far. In particular, we show that self-dual interval orders of a
given reduced size admit a matrix representation by a family of matrices that
have simpler structure than self-dual Fishburn matrices.

A \emph{row-Fishburn matrix} is an upper-triangular matrix of nonnegative
integers with the property that each row has at least one nonzero entry. A
row-Fishburn matrix is \emph{primitive} if all its entries are equal to 0 or~1.
As usual, the size of a row-Fishburn matrix is taken to be the sum of its
cells. The matrix statistics from Table~\ref{tab-stats} can be applied to
row-Fishburn matrices, even though the poset interpretations listed there are
not applicable. 

\begin{theorem}\label{thm-reduced}
 For any integers $n\ge 1$ and $m\ge 1$, the following three quantities are all
equal.
\begin{enumerate}
\item The number of self-dual Fishburn matrices of reduced size $n$, whose
last column has sum $m$, and whose diagonal cells are all equal to zero.
\item The number of self-dual Fishburn matrices of reduced size $n$, whose last
column has sum $m$, and whose diagonal cells are not all equal to zero.
\item The number of row-Fishburn matrices of size $n$ whose last column has
sum~$m$. 
\end{enumerate}
Moreover, for integers $n$, $m$, $p$ and $q$ with $p+q\ge
1$, the number of self-dual Fishburn matrices $M$ with $\rs(M)=n$, $\max(M)=m$,
$\dg(M)=p$ and $\iso(M)=q$ is equal to the number of row-Fishburn matrices $N$
of size $n$ with $\max(N)=m$, $\min(N)=p$ and $\iso(N)=q$. All these
relationships remain valid when restricted to primitive matrices.
\end{theorem}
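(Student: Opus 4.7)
My plan is to derive closed-form generating functions on both sides of the claimed correspondence, match them algebraically, and then extend from primitive to general matrices by a standard inflation argument.

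First, for the row-Fishburn side, I would set up a recurrence analogous to Lemma~\ref{lem-rekur}. Given a primitive row-Fishburn matrix $N$ of dimension $k$, every primitive row-Fishburn matrix of dimension $k+1$ is obtained uniquely by setting $M'_{k+1,k+1}=1$ and, for each $1$-cell $(j,k)$ in the last column of $N$, choosing $(M'_{j,k},M'_{j,k+1})\in\{(1,0),(0,1),(1,1)\}$. The crucial point is that since row-Fishburn matrices have no column constraint, every such choice produces a valid matrix---no ``bad'' extensions arise. Tracking the statistics $(\iso,\min,\max,\rest)$ exactly as in Section~\ref{sec-nonsym} yields the clean recurrence $R_{k+1}(v,w,x,y)=v\sigma[R_k(v,w,x,y)]$ with $R_1=x$, where $\sigma$ is the substitution operator from that section. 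Iteration gives $R_k=V_{k-1}(x,y)\prod_{i=0}^{k-2}V_i(v,w)$, and the substitution $v\mapsto tv$, $w\mapsto tw$, $x\mapsto tx$, $y\mapsto ty$ (so that $t$ tracks size rather than dimension) yields the primitive row-Fishburn generating function
\[
\hat R(t,v,w,x,y)=\sum_{n\ge 0}V_n(tx,ty)\prod_{i=0}^{n-1}V_i(tv,tw).
\]

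Next, I would reparameterize Theorem~\ref{thm-sym} so that the size variable records reduced size. Splitting primitive self-dual Fishburn matrices by their corner cell as in Lemma~\ref{lem-spp}, and substituting $T=1$, $V=tv$, $W=tw$, $Z=tz$ into $\SP(T,V,W,Z)$, then using the telescoping identity $1+V_i(tv,tw)=(1+tv)(1+tw)^i$ to cancel nearly every factor in Theorem~\ref{thm-sym}, one obtains the remarkably simple expression
\[
\SP(1,tv,tw,tz)=\sum_{n\ge 0}(1+tz)^n\prod_{i=0}^{n-1}V_i(tv,tw).
\]
Hence $\Phi(t,v,x,z,w):=(1+tx)\SP(1,tv,tw,tz)-1$ is the generating function for primitive self-dual Fishburn matrices by $(\rs,\max,\iso,\dg,\se)$.

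Comparing the two expressions then reduces to a one-line algebraic identity. Using $V_n(tx,tz)=(1+tx)(1+tz)^n-1$, one verifies that $\hat R(t,v,w,x,z)=\Phi(t,v,x,z,w)-\Phi(t,v,0,0,w)$. Combinatorially, this says that the refined counts on the two sides agree except for the $(\iso,\dg)=(0,0)$ contribution on the self-dual side, which has no row-Fishburn counterpart because a row-Fishburn matrix must have nonempty first row, forcing $\iso+\min\ge 1$---precisely the hypothesis $p+q\ge 1$ of the second part of the theorem. For the first part, both $\Phi(t,v,0,0,1)$ (the generating function of the quantity~$A$) and $\hat R(t,v,1,v,1)$ (the generating function of~$C$, with $x=v$ encoding the last-column sum $\iso+\max$) reduce to the same expression $\sum_{n\ge 1}\prod_{i=0}^{n-1}V_i(tv,t)$, giving $A=C$. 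Combined with $B=C$ from the second part, we obtain $A=B=C$.

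Finally, the extension from primitive to general matrices is the standard inflation: each $1$-cell of a primitive row-Fishburn matrix is replaced by an arbitrary positive integer, while each $1$-cell of a primitive self-dual Fishburn matrix is either on the anti-diagonal (in which case it is replaced independently by a positive integer) or belongs to a symmetric pair with its mirror (in which case the pair is replaced by a pair of equal positive integers). Under the statistic correspondence $\dg\leftrightarrow\min$, $\se\leftrightarrow\rest$, $\max\leftrightarrow\max$, $\iso\leftrightarrow\iso$, this inflation amounts to the identical substitution $tv\mapsto tv/(1-tv)$ and similarly for the remaining variables on both generating functions, so the equality of generating functions (and thus of counts) transfers. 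The most delicate step I expect is the algebraic simplification of $\SP(1,tv,tw,tz)$ together with the careful identification of the $(p,q)=(0,0)$ defect with $\Phi(t,v,0,0,w)$; the remaining manipulations are routine.
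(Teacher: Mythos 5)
Your proposal is correct and follows essentially the same route as the paper: compute closed forms for the generating functions of primitive row-Fishburn matrices and of primitive self-dual Fishburn matrices graded by $(\rs,\max,\se,\iso,\dg)$ (the latter by specializing Theorem~\ref{thm-sym} at $t=1$ and telescoping with $1+V_i(v,w)=(1+v)(1+w)^i$), identify the all-zero-diagonal part as the $x=z=0$ specialization, match the two expressions, and pass to general matrices by the $\alpha\mapsto\alpha/(1-\alpha)$ inflation. The only divergence is cosmetic: you derive the row-Fishburn generating function via a column-splitting recurrence $R_{k+1}=v\sigma[R_k]$ (correctly noting that the absence of a column constraint removes the ``bad'' extension), whereas the paper obtains the same product formula directly by counting the admissible fillings of each row.
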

\begin{proof}
We prove the statements by comparing the generating functions of the
relevant objects. We will first prove the statement for primitive matrices, the
statement for general matrices then follows easily.

Let $\rF$ be the set of primitive row-Fishburn matrices, and consider the
generating function
\[
 R(v,w,x,y)=\sum_{N\in\rF} v^{\max(N)} w^{\rest(N)}x^{\iso(N)}y^{\min(N)}.
\]
This generating function satisfies the identity
\[
 R(v,w,x,y)=\sum_{n\ge 0} \left( (1+x)(1+y)^n -1\right)\prod_{i=0}^{n-1} \left(
(1+v)(1+w)^i-1\right).
\]
To see this, note that the $n$-th summand on the right-hand side of this
expression is the generating function of primitive row-Fishburn matrices
with $(n+1)$ rows. Indeed, the factor $(1+x)(1+y)^n -1$ counts the number of
possibilities to fill the first row of such a matrix, while the factor
$(1+v)(1+w)^i-1$ counts the number of possibilities to fill the row $n+1-i$.

Recall that $\cS$ is the set of primitive self-dual Fishburn matrices, and let
$\cSn$ denote the set of primitive self-dual Fishburn matrices whose
north-east diagonal only contains zeroes, while $\cSd=\cS\setminus\cSn$ is the
set of primitive self-dual Fishburn matrices whose north-east diagonal contains
at least one positive cell. Let $S'(v,w,x,z)$ be the generating function
\[
 \Sr(v,w,x,z)=\sum_{M\in\cS} v^{\max(M)} w^{\se(M)} x^{\iso(M)} z^{\dg(M)},
\]
and let $\Srn(v,w)$ and $\Srd(v,w,x,z)$ be the analogous generating functions
for the sets $\cSn$ and $\cSd$, respectively, where $\Srn$ does not depend on
$x$ and $z$ because matrices from $\cSn$ have $\iso(M)=\dg(M)=0$. From
Theorem~\ref{thm-sym}, and in analogy to Lemma~\ref{lem-spp}, we see that
\begin{align*}
 \Sr(v,w,x,z)&=(1+x)\SP(1,v,w,z)-1\\
&=-1+(1+x)\sum_{n\ge 0}(1+z)^n(1+v)^n
(1+w)^{\binom{n}{2}}\prod_{i=0}^{n-1}\frac{V_i(v,w)}{1+V_i(v,w)}\\
&=-1+(1+x)\sum_{n\ge 0}(1+z)^n(1+v)^n
(1+w)^{\binom{n}{2}}\prod_{i=0}^{n-1}\frac{(1\!+\!v)(1\!+\!w)^i\!-\!1}{
(1\!+\!v)(1\!+\!w)^i } \\
&=-1\!+\!\sum_{n\ge 0}(1+x)(1+z)^n\prod_{i=0}^{n-1}\left((1+v)(1+w)^i-1\right).
\end{align*}
Since $\Srn(v,w)=\Sr(v,w,0,0)$ and $\Srd(v,w,x,z)=\Sr(v,w,x,z)-\Srn(v,w)$, we
get
\begin{align*}
 \Srn(v,w)&=-1+\sum_{n\ge0}\prod_{i=0}^{n-1}\left((1+v)(1+w)^i-1\right),\text{
and}\\
\Srd(v,w,x,z)&=\sum_{n\ge
0}\left((1+x)(1+z)^n-1\right)\prod_{i=0}^{n-1}\left((1+v)(1+w)^i-1\right)
\end{align*}
We see that $\Srn(v,w)=\Srd(v,w,v,w)=R(v,w,v,w)$, and that
$\Srd(v,w,x,z)=R(v,w,x,z)$, proving the theorem for primitive matrices. To
prove the non-primitive case, it is enough to observe that the generating
functions for general matrices may be obtained from the generating functions of
primitive matrices by substituting $\alpha/(1-\alpha)$ for each variable
$\alpha\in\{v,w,x,y,z\}$.
\end{proof}

Although Theorem~\ref{thm-reduced} follows easily from the generating function
formulas established before, it might still be worthwhile to provide a bijective
argument. Currently, we are not aware of such an argument. 

From Theorem~\ref{thm-reduced}, we may directly deduce the following corollary.
\begin{corollary}\label{cor-reduced}
 Let $s_m$ be the number of primitive self-dual interval orders of reduced size
$m$ and let $r_m$ be the number of primitive row-Fishburn matrices of size $m$.
Then $s_m=2r_m$ for each $m\ge 1$, and
\[
 \sum_{m\ge 1} r_m x^m = \sum_{n\ge 0}
\prod_{i=0}^{n}\left((1+x)^{i+1}-1\right).
\]
Let $t_m$ be the number of self-dual interval orders of reduced size $m$, and
let $q_m$ be the number of row-Fishburn matrices of size~$m$. Then $t_m=2q_m$
for each $m\ge 1$, and 
\[
 \sum_{m\ge 1} q_m x^m =\sum_{n\ge 0} \prod_{i=0}^{n} \left(
\frac{1}{(1-x)^{i+1}}-1\right).
\]
\end{corollary}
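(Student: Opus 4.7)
The plan is to extract both parts of the corollary directly from Theorem~\ref{thm-reduced} and the explicit formula for the generating function $R(v,w,x,y)$ derived inside its proof; essentially no new combinatorics is needed.

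For the numerical relations, I would start from the primitive case. Theorem~\ref{thm-reduced} states that for every $n,m\ge 1$ the counts labelled~(1), (2), and~(3) agree. Now every primitive row-Fishburn matrix of size $n\ge 1$ has a positive last-column sum, because its bottom row, being upper-triangular, can carry a nonzero entry only in the last column; the same argument applies to every primitive self-dual Fishburn matrix of reduced size $n\ge 1$. Therefore summing (1)+(2) over $m\ge 1$ accounts for \emph{all} primitive self-dual matrices of reduced size $n$, giving $s_n = 2r_n$. Replacing each 1-cell by an arbitrary positive integer (the standard inflation used in the last paragraph of the proof of Theorem~\ref{thm-reduced}) yields the general case $t_n = 2q_n$.

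For the generating function of $r_m$, I would specialize the identity
\[
R(v,w,x,y)=\sum_{n\ge 0}\bigl((1+x)(1+y)^n-1\bigr)\prod_{i=0}^{n-1}\bigl((1+v)(1+w)^i-1\bigr),
\]
established inside the proof of Theorem~\ref{thm-reduced}, at $v=w=x=y$ (since the size of a row-Fishburn matrix is just $\iso+\min+\max+\rest$, with each cell contributing once). The leading factor becomes $(1+x)^{n+1}-1$, which I then absorb into the product by raising its upper index from $n-1$ to $n$, producing exactly the claimed closed form. For the general case I would invoke the same inflation as above: replacing each cell value $1$ by an arbitrary positive integer corresponds, in the size-generating function, to the substitution $x\mapsto x/(1-x)$, which sends $(1+x)^{i+1}-1$ to $1/(1-x)^{i+1}-1$ and yields the second displayed formula.

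The only real obstacle is bookkeeping: I must make sure that (i)~summing quantities (1) and (2) of Theorem~\ref{thm-reduced} over $m\ge 1$ captures every nonempty self-dual matrix (guaranteed by the last-column observation above), and (ii)~the inflation substitution $x\mapsto x/(1-x)$ interacts correctly with reduced size on the self-dual side, which it does because the reduced-size cells of a self-dual matrix are in bijection with the cells of the corresponding row-Fishburn matrix under the correspondence of Theorem~\ref{thm-reduced}, so inflating each 1-cell to an arbitrary positive integer contributes the same factor on both sides.
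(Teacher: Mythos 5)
Your proposal is correct and matches the paper's (implicit) argument: the paper gives no separate proof, stating only that the corollary follows directly from Theorem~\ref{thm-reduced}, and your write-up simply makes that deduction explicit by summing quantities (1) and (2) over $m$ (justified by the observation that the bottom-right entry forces a positive last-column sum), specializing $R(v,w,x,y)$ at $v=w=x=y$, and applying the standard $\alpha\mapsto\alpha/(1-\alpha)$ inflation for the non-primitive case. All steps check out.
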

Let us remark that the numbers $(r_m)_{m\ge 1}$ from Corollary~\ref{cor-reduced}
correspond to the sequence A179525 in OEIS~\cite{oeis}, while $(q_m)_{m\ge 1}$ 
conjecturally correspond to A158691. To be more precise, A158691 is the sequence
of coefficients of the power series $\sum_{n\ge
0}\prod_{i=1}^{n}\left(1-(1-x)^{2i-1}\right)$, which, according to the notes
in the OEIS entry, are conjectured to be equal to $q_m$ for $m\ge 1$.

\section{Final Remarks and Open Problems}
The formulas of the form we derived in this paper provide an efficient way to
explicitly compute the coefficients of the corresponding generating functions. 
They are also occasionally useful in establishing correspondences between
different combinatorial structures, as shown in
Theorem~\ref{thm-reduced}. It is not clear, however, whether one can
use such formulas to extract information about the asymptotic growth of the
coefficients. Zagier~\cite{Zagier} has used formula~\eqref{eq-zag}, together
with several non-trivial power series identities, to find a very precise
asymptotic estimate of the number of interval orders on $n$ elements. We state
a weaker version of this estimate as fact.
\begin{fact}[\cite{Zagier}]\label{fac-gn} If $g_n$ is the number of interval
orders of size $n$, then
\[
 g_n=(\alpha+\cO(n^{-1})) n! \left(\frac{6}{\pi^2}\right)^n\sqrt{n}\quad \text{
  with } \alpha=\frac{12\sqrt{3}}{\pi^{-5/2}}e^{\pi^2/12}.
\]
\end{fact}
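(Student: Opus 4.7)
The starting point is Zagier's formula \eqref{eq-zag}, $\sum_{n\ge 0} g_n x^n = F(x) = \sum_{n\ge 0}\prod_{i=1}^n(1-(1-x)^i)$. The plan is to extract the coefficient asymptotics by a saddle-point computation after an exponential change of variables. I would first substitute $x = 1-e^{-t}$, so that $F(1-e^{-t}) = \sum_{n\ge 0}\prod_{i=1}^n(1-e^{-it})$; the inner product is now a truncation of the infinite product $\prod_{i\ge 1}(1-e^{-it})$, and $g_n$ may be recovered by a Cauchy-type integral over a suitable contour in the variable~$t$.

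The essential analytic input is the $t\to 0^+$ expansion
\[
\sum_{i\ge 1}\log(1-e^{-it}) = -\frac{\pi^2}{6t} + \frac{1}{2}\log\frac{t}{2\pi} + \frac{t}{24} + \cO(e^{-c/t}),
\]
which follows from the modular transformation $\eta(-1/\tau) = \sqrt{\tau/i}\,\eta(\tau)$ of the Dedekind eta function. The dominant exponential $e^{-\pi^2/(6t)}$ is what eventually produces the factor $(6/\pi^2)^n$ in the answer. Alongside this one needs a tail estimate: for $n \gg 1/t$ the truncated product $\prod_{i=1}^n(1-e^{-it})$ differs from the infinite product by a super-polynomially small factor, so in the saddle-point regime the truncation can be dropped without affecting the leading-order asymptotics.

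With these ingredients the sum on the right-hand side of \eqref{eq-zag} is well-approximated by $C(t)\,e^{-\pi^2/(6t)}$ for an explicit prefactor $C(t)$ obtained from the eta expansion, and the Cauchy integral for $g_n$ reduces to a standard steepest-descent computation whose saddle sits at $t^{\ast} \sim \pi/\sqrt{6n}$. The Gaussian width at the saddle simultaneously produces the factorial growth~$n!$, the exponential factor $(6/\pi^2)^n$, and the $\sqrt{n}$ subexponential correction. The main obstacle is to track every constant and remainder term precisely enough to confirm the value $\alpha = 12\sqrt{3}\,\pi^{5/2}e^{\pi^2/12}$: the factor $e^{\pi^2/12}$ and the exact power of $\pi$ are extremely sensitive to the subleading terms in the eta expansion and to how the remaining head of the sum (the $n \lesssim 1/t$ region, where the truncation is non-trivial) is incorporated. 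In Zagier's original proof~\cite{Zagier} this delicate bookkeeping is carried out through several non-trivial power-series identities rather than a bare saddle-point computation, and finding a more transparent derivation of the constant~$\alpha$ from formula~\eqref{eq-zag} would be a worthwhile refinement.
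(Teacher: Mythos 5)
The paper does not actually prove this statement: it is imported verbatim from Zagier~\cite{Zagier} (a weakened form of his Theorem~3), with the explicit comment that Zagier's derivation rests on ``several non-trivial power series identities''. So the only question is whether your sketch is a viable independent derivation, and it is not — the obstruction is more basic than the ``delicate bookkeeping'' you defer at the end. The series $F(x)=\sum_{n\ge0}\prod_{i=1}^n(1-(1-x)^i)$ has coefficients growing like $n!\,(6/\pi^2)^n$, hence radius of convergence zero, so there is no analytic function whose Taylor coefficients could be ``recovered by a Cauchy-type integral''. The substitution $x=1-e^{-t}$ does not repair this: for every real $t>0$ the terms $\prod_{i=1}^n(1-e^{-it})$ decrease monotonically to the strictly positive limit $\prod_{i\ge1}(1-e^{-it})\approx\sqrt{2\pi/t}\,e^{-\pi^2/(6t)}$, so the numerical series $\sum_{n\ge0}\prod_{i=1}^n(1-e^{-it})$ diverges outright. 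Your ``tail estimate'' (replace the truncated product by the infinite one for $n\gg1/t$) exhibits this divergence rather than controlling it, and the claim that the sum is well approximated by $C(t)e^{-\pi^2/(6t)}$ is therefore not meaningful; there is no contour and no saddle. (There is also a sign slip in your modular expansion: the logarithmic term should be $\tfrac12\log\tfrac{2\pi}{t}$, not $\tfrac12\log\tfrac{t}{2\pi}$.)

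The actual route in~\cite{Zagier} is structurally different. The function $F(q)=\sum_{n\ge0}(q;q)_n$ is treated only at roots of unity, where the sum terminates; the ``strange identity'' matches it there to infinite order with the half-derivative $-\tfrac12\sum_{n\ge1}n\chi_{12}(n)q^{(n^2-1)/24}$ of the Dedekind eta function. This produces an asymptotic expansion $F(e^{-t})\sim\sum_{m\ge0}a_m t^m$ whose coefficients involve the special values $L(-2m-1,\chi_{12})$; the factorial growth $a_m\sim c\,m!\,(6/\pi^2)^m\sqrt m$ of these coefficients is what yields $n!\,(6/\pi^2)^n\sqrt n$, and the factor $e^{\pi^2/12}$ arises from the transfer principle for factorially divergent series under the substitution $t=-\log(1-x)$ (the $\tfrac12 x^2$ term of $t(x)$ contributes $e^{1/(2A)}$ with $A=6/\pi^2$) — not from a Gaussian width at a saddle. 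A self-contained proof would have to supply both ingredients. One last caveat on the constant: numerically $\alpha\approx 2.70$, i.e.\ $\alpha=12\sqrt3\,\pi^{-5/2}e^{\pi^2/12}$; the paper's $\pi^{-5/2}$ placed in a denominator is evidently a typo for $\pi^{5/2}$, and your $12\sqrt3\,\pi^{5/2}e^{\pi^2/12}\approx 827$ reproduces the typo rather than the correct value.
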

Drmota~\cite{drmota} has pointed out that from this estimate, we may deduce
the asymptotic fraction of primitive posets among all interval orders. 
\begin{fact}[\cite{drmota}]
 With $g_n$ as above, and with $p_n$ being the number of primitive interval
orders of size $n$, we have 
\[
\lim_{n\to\infty} \frac{p_n}{g_n} = e^{-\pi^2/6}.
\]
\end{fact}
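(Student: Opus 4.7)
The plan is to write $p_n$ as an explicit signed combination of $g_{n-j}$ by inverting the standard inflation/deflation of indistinguishability classes, and then pass to the limit termwise using Fact~\ref{fac-gn} combined with a dominated convergence argument.

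First I recall from the introduction that every interval order of size $n$ arises in exactly one way from a primitive interval order on $k$ elements by replacing each element with a non-empty class of duplicates. The inflations of a fixed primitive order on $k$ elements into one of size $n$ are in bijection with the compositions of $n$ into $k$ positive parts, so
\[
g_n \;=\; \sum_{k=1}^{n}\binom{n-1}{k-1}\,p_k,
\]
or equivalently $G(x)=P\bigl(x/(1-x)\bigr)$ for the ordinary generating functions $G=\sum g_n x^n$ and $P=\sum p_n x^n$. The substitution $y=x/(1-x)$ is inverted by $x=y/(1+y)$, giving $P(y)=G\bigl(y/(1+y)\bigr)$; extracting the coefficient of $y^n$ from the binomial expansion of $\bigl(y/(1+y)\bigr)^k$ then yields
\[
p_n \;=\; \sum_{j=0}^{n-1}(-1)^j\binom{n-1}{j}\,g_{n-j}.
\]

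Setting $c=6/\pi^2$, Fact~\ref{fac-gn} gives, for each fixed $j\ge 0$,
\[
\binom{n-1}{j}\frac{g_{n-j}}{g_n}
\;=\;\frac{n-j}{n}\cdot\frac{c^{-j}}{j!}\sqrt{\tfrac{n-j}{n}}\bigl(1+\cO(1/n)\bigr)
\;\longrightarrow\;\frac{(\pi^2/6)^j}{j!}\quad\text{as }n\to\infty.
\]
If this limit can be taken termwise inside the sum for $p_n/g_n$, then
\[
\lim_{n\to\infty}\frac{p_n}{g_n}\;=\;\sum_{j\ge 0}\frac{(-1)^j}{c^j\,j!}\;=\;e^{-1/c}\;=\;e^{-\pi^2/6},
\]
which is the claim. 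So the content of the proof is the justification of this interchange.

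For that I would use dominated convergence. Fact~\ref{fac-gn} produces a constant $C$ with $C^{-1}\le g_n/(n!\,c^n\sqrt n)\le C$ for all $n$ large (finitely many small $n$ can be absorbed into $C$). Substituting these two-sided bounds into the formula above and using $(n-j)/n\le 1$ together with $\sqrt{(n-j)/n}\le 1$ yields the uniform estimate
\[
\binom{n-1}{j}\frac{g_{n-j}}{g_n}\;\le\;C'\cdot\frac{c^{-j}}{j!}\qquad\text{for all }0\le j<n,
\]
and since $\sum_{j\ge 0}c^{-j}/j!=e^{\pi^2/6}<\infty$, dominated convergence applies and the proof is complete. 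The one delicate point is extracting from Fact~\ref{fac-gn} a concrete two-sided inequality valid for all $n\ge 1$ rather than just an asymptotic equivalence; I expect this to be the main (though minor) obstacle, and everything else is mechanical bookkeeping.
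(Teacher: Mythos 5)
Your proposal is correct and follows essentially the same route as the paper: both invert $G(x)=F(x/(1+x))$ to get $p_n=\sum_j(-1)^j\binom{n-1}{j}g_{n-j}$, substitute Zagier's asymptotic for $g_{n-j}$, and pass to the limit termwise to obtain $\sum_j(-\pi^2/6)^j/j!=e^{-\pi^2/6}$. If anything, your explicit dominated-convergence (Tannery) justification of the termwise limit, via a uniform two-sided bound on $g_n/(n!\,(6/\pi^2)^n\sqrt{n})$, is more careful than the paper, which asserts the final limit without spelling out the interchange.
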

\begin{proof}
The generating functions $F(x)=\sum_{m\ge0} g_mx^m$ and $G(x)=\sum_{n\ge 0} p_n
x^n$ are related by $F(x)=G(x/(1-x))$, or equivalently, $G(x)=F(x/(1+x))$.
Thus, for every $n\ge1$, we have
\begin{align*}
 p_n&= [x^n] F\left(\frac{x}{1+x}\right)=[x^n]\sum_{m=1}^n g_m
\left(\frac{x}{1+x}\right)^m
=\sum_{m=1}^n g_m (-1)^{n-m} \binom{n-1}{m-1}\\
&= \sum_{m=1}^n m!\left(\frac{6}{\pi^2}\right)^m
\sqrt{m}(-1)^{n-m}\frac{(n-1)!}{(m-1)!(n-m)!}\left(\alpha+\cO\left(\frac{1}{m}
\right)\right)\\
&= \alpha n!\left(\frac{6}{\pi^2}\right)^n\sqrt{n}  \sum_{m=1}^n
 \left(\frac{m}{n}\right)^{3/2}(-1)^{n-m} \left(\frac{6}{\pi^2}\right)^{m-n}
\frac{1}{(n-m)!}\left(1+\cO\left(\frac{1}{m}\right)\right)\\
&=g_n\sum_{k=0}^{n-1}
\left(1-\frac{k}{n}\right)^{3/2}\frac{(-\pi^2/6)^k}{k!}\left(1+\cO\left(\frac{1}
{n-k}\right)\right)=g_n e^{-\pi^2/6} (1+o(1)).\qedhere
\end{align*}
\end{proof}

Our main general open problem is to obtain similar asymptotic estimates for the
enumeration of self-dual interval orders, counted either by their size or their
reduced size. Using the generating function formulas, we can easily enumerate
self-dual interval orders of a given size, and using numerical manipulations
similar to those described by Zagier~\cite[Section 3]{Zagier} to accelerate
convergence, we can then make conjectures about the coefficient asymptotics.

\begin{conjecture}\label{con-red}
Let $s_n$ be the number of primitive self-dual interval orders of reduced size
$n$, and let $t_n$ be the number of self-dual interval orders of reduced
size~$n$. Then
\begin{gather*}
t_n=(\beta+\cO(n^{-1})) n!\left(\frac{12}{\pi^2}\right)^n\text{
with } \beta=\frac{12\sqrt{2}}{\pi^2}e^{\pi^2/24},\\
 \text{ and }\lim_{n\to\infty} \frac{s_n}{t_n}=e^{-\pi^2/12}.
\end{gather*}
\end{conjecture}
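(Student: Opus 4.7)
The plan is to split Conjecture~\ref{con-red} into two tasks via Corollary~\ref{cor-reduced}: writing $t_n = 2q_n$ and $s_n = 2r_n$, the claimed limit $s_n/t_n \to e^{-\pi^2/12}$ is equivalent to $r_n/q_n \to e^{-\pi^2/12}$, and the main asymptotic for $t_n$ reduces to a sharp asymptotic for the number $q_n$ of row-Fishburn matrices of size~$n$. The first of these is the hard half; the second will drop out once the first is in hand.

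For the asymptotic of $q_n$, I would adapt Zagier's analytic approach from Section~3 of~\cite{Zagier}. Starting from
\[
Q(x) := \sum_{m\ge 1} q_m x^m = \sum_{n\ge 0}\prod_{i=0}^n\left(\frac{1}{(1-x)^{i+1}}-1\right),
\]
the substitution $q = 1-x$ and a routine simplification of the product give the $q$-series form
\[
Q(1-q) \;=\; \sum_{m\ge 1} q^{-\binom{m+1}{2}}(q;q)_m, \qquad (q;q)_m = \prod_{i=1}^m(1-q^i),
\]
which lives in exactly the framework Zagier uses for the Fishburn series $F(x)=\sum_m(q;q)_m$, but with an additional weight $q^{-\binom{m+1}{2}}$. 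The plan is to seek a ``strange identity'' or an Eichler-integral-type representation for this weighted sum analogous to Zagier's, so that its singular behavior as $q\to 1^-$ can be extracted via the modular transformation of the Dedekind eta function. Heuristically, the extra $q^{-\binom{m+1}{2}}$ factor should be responsible both for doubling the exponential rate $6/\pi^2 \mapsto 12/\pi^2$ and for shrinking the subexponential constant $e^{\pi^2/12}\mapsto e^{\pi^2/24}$; matching the full constant $\beta = \tfrac{12\sqrt{2}}{\pi^2}e^{\pi^2/24}$ with error $\cO(n^{-1})$ will require carefully tracking the subleading corrections, mirroring Zagier's delicate chain of $q$-series manipulations.

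Once the first claim is established, the ratio $r_n/q_n$ follows by Drmota's argument essentially verbatim. The generating function identity $Q(x) = R(x/(1-x))$, which expresses that a general row-Fishburn matrix is obtained from a primitive one by replacing each $1$-cell by an arbitrary positive integer, inverts to
\[
r_n = \sum_{m=1}^n q_m(-1)^{n-m}\binom{n-1}{m-1}.
\]
Plugging in $q_m = (\beta/2 + \cO(m^{-1}))\,m!(12/\pi^2)^m$ and setting $k = n-m$, the same manipulation as in the proof following Fact~\ref{fac-gn} produces
\[
r_n \;=\; q_n\sum_{k=0}^{n-1}\left(1-\frac{k}{n}\right)\frac{(-\pi^2/12)^k}{k!}\left(1+\cO\!\left(\frac{1}{n-k}\right)\right) \;=\; q_n\,e^{-\pi^2/12}(1+o(1)),
\]
the only cosmetic change from Drmota's computation being that the absent $\sqrt{n}$ in our leading asymptotic turns his exponent $3/2$ into $1$, which is still harmless in the limit.

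The main obstacle is squarely the first step. Zagier's asymptotic for the Fishburn numbers rests on a non-trivial identification of $F(1-q)$ with an Eichler integral of a weight-$\tfrac32$ modular form, followed by a careful Mellin-type analysis near $q=1$; for our weighted sum $\sum_m q^{-\binom{m+1}{2}}(q;q)_m$, an analogous modular (or quantum-modular) identity needs to be uncovered, or alternatively the asymptotics extracted by a direct saddle-point analysis of the $m$-sum after a Poisson-like transformation. The predicted disappearance of the $\sqrt{n}$ factor present in Zagier's case is a striking qualitative difference, strongly suggesting that the underlying modular object carries a different weight and that a straightforward transfer of Zagier's identities will not suffice; finding the correct modular interpretation is where the real work lies.
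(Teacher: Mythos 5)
The statement you are proving is Conjecture~\ref{con-red}; the paper does not prove it. The author explicitly presents it as a conjecture supported only by numerical computation of coefficients (accelerated by manipulations in the style of Zagier~\cite[Section 3]{Zagier}), and lists the asymptotic enumeration of self-dual interval orders as the main open problem of the paper. So there is no proof in the paper to compare against, and the question is whether your proposal closes the gap on its own. It does not. Your reductions are sound: the passage to row-Fishburn matrices via Corollary~\ref{cor-reduced} ($t_n=2q_n$, $s_n=2r_n$) is correct, the rewriting $Q(1-q)=\sum_{m\ge1}q^{-\binom{m+1}{2}}(q;q)_m$ checks out, and the Drmota-style transfer from a hypothetical asymptotic $q_m=(\beta/2+\cO(1/m))\,m!(12/\pi^2)^m$ to $r_n/q_n\to e^{-\pi^2/12}$ would indeed go through essentially as in the proof following Fact~\ref{fac-gn} (with the exponent $3/2$ replaced by $1$, as you note). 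But all of this is conditional on the first step, and for that step you offer only a program: ``seek a strange identity or an Eichler-integral-type representation,'' ``finding the correct modular interpretation is where the real work lies.'' That is precisely the content of the conjecture, not a proof of it.

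The difficulty is not merely one of bookkeeping. Zagier's asymptotic for $\sum_m(q;q)_m$ rests on the specific half-derivative/Eichler-integral identity for that particular series near roots of unity; your weighted series $\sum_m q^{-\binom{m+1}{2}}(q;q)_m$ is a different object, and, as you yourself observe, the predicted absence of the $\sqrt{n}$ factor signals that whatever modular or quantum-modular structure governs it (if any) has a different weight, so Zagier's identities cannot simply be transported. Until an analogue of his ``strange identity'' (or a rigorous saddle-point analysis of the $m$-sum) is actually established and the constant $\beta=\tfrac{12\sqrt2}{\pi^2}e^{\pi^2/24}$ with error $\cO(n^{-1})$ is extracted from it, the first display of Conjecture~\ref{con-red} remains unproven, and with it the second. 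What you have written is a reasonable research plan consistent with the paper's own suggestions, but the conjecture is still open after it.
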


\begin{conjecture}\label{con-sym}
Let $r_n$ be the number of self-dual interval orders of size $n$, and let $q_n$
be the number of primitive self-dual interval orders of size~$n$. Then
\begin{gather*}
 r_n\!=\!(\gamma\!+\!\cO(n^{-1/2}))\sqrt{n} \left(\frac{\delta
n}{e}\right)^{\!n/2} 2^{\sqrt{\delta
n}}\text{  with }\gamma\!\approx\!1.361951039\dotsc \text{ and }
\delta\!=\!\frac{6}{\pi^2},\\
 \text{ and }\lim_{n\to\infty} \frac{q_n}{r_n} = \frac{1}{2} e^{-\pi^2/12}.
\end{gather*}
\end{conjecture}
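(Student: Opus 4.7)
The starting point is the explicit formula for the generating function
\[
R(x)=1+\SGP(1,x,x,x,x)=\sum_{n\ge0}\frac{1}{(1-x)^{n+1}}\prod_{i=0}^{n-1}\left(\frac{1}{(1-x^2)^{i+1}}-1\right)
\]
from Corollary \ref{cor-sym-gen}, whose coefficients are exactly the numbers $r_n$.  The plan is to mimic Zagier's analysis~\cite{Zagier} of the analogous formula \eqref{eq-zag} for the ordinary generating function of interval orders, adapted to the self-dual setting.  The guiding intuition is that the factor $(1-x^2)^{i+1}$ inside the inner product is the square of $(1-x)^{i+1}$ times a smoother factor, so the principal exponential scaling should come out the same as in Zagier's analysis but with a parameter halved, yielding $(\delta n/e)^{n/2}$ in place of $(6/\pi^2)^n n!$; the anomalous factor $2^{\sqrt{\delta n}}$ will have to emerge as a sub-exponential correction from careful saddle-point analysis.

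The first step is to isolate the inner sum as a $q$-series.  Writing $(1-x^2)^{-(i+1)}-1 = 1 - (1-x^2)^{i+1}$ divided by $(1-x^2)^{i+1}$, one factors the inner product as $(\prod_{i=0}^{n-1}(1-(1-x^2)^{i+1}))\cdot (1-x^2)^{-\binom{n+1}{2}}$, so that
\[
R(x)=\sum_{n\ge0}\frac{(1-x^2)^{-\binom{n+1}{2}}}{(1-x)^{n+1}}\prod_{i=1}^{n}(1-(1-x^2)^{i}).
\]
Now apply the substitution $1-x^2=e^{-2s}$ (so $x \to 1$ corresponds to $s\to 0^+$) that is analogous to the one used by Zagier.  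The product $\prod_{i\ge 1}(1-e^{-2si})$ is the Euler function $(e^{-2s};e^{-2s})_\infty$, which by the Dedekind $\eta$-function modular transformation behaves like $\sqrt{\pi/s}\, e^{-\pi^2/(12s)}$ times a rapidly-vanishing factor as $s\to 0^+$.  Combined with the prefactor $(1-x^2)^{-\binom{n+1}{2}}(1-x)^{-n-1}=e^{s n(n+1)}(1-x)^{-n-1}$, this identifies the large-$n$ behaviour of the summand as a Gaussian in $n$ centered around $n_* \approx \sqrt{\delta}/(1-x) \cdot (\text{something})$, whose evaluation by Laplace's method will yield the quantity in square brackets at the saddle.

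The second step is singularity analysis.  The function $R(x)$ is expected to have its dominant singularity at $x=1$; after plugging the asymptotic form of the inner sum into the integral $r_n = \frac{1}{2\pi i}\oint R(x)x^{-n-1}dx$, one performs a saddle-point analysis in $x$ (equivalently in $s$).  The presence of the factor $(\delta n/e)^{n/2}$ instead of a pure geometric term $c^n$ reflects the fact that both the saddle point in $n$ and the saddle point in $s$ must be coupled: the saddle in $s$ scales like $s_*\sim c/\sqrt{n}$ rather than the constant scaling in Zagier's single-series analysis, and it is exactly this shift that produces the factor $2^{\sqrt{\delta n}}=\exp(\sqrt{\delta n}\log 2)$.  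Carrying out both Laplace evaluations carefully, with $\delta=6/\pi^2$ emerging from the quadratic approximation of the exponent, should give the stated leading constant $\gamma$ and the error term $O(n^{-1/2})$.

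The third step, establishing $\lim_{n\to\infty} q_n/r_n = \frac{1}{2}e^{-\pi^2/12}$, proceeds along the lines of the proof given in the excerpt for $p_n/g_n = e^{-\pi^2/6}$.  The generating functions $R(x)$ and $Q(x)=\sum q_n x^n$ are related via a substitution $R(x)=Q(x/(1-x))$ (or the analogous one after accounting for the treatment of isolated elements in the self-dual case), and therefore
\[
q_n=\sum_{m=1}^{n} r_m (-1)^{n-m}\binom{n-1}{m-1}.
\]
Substituting the asymptotic for $r_m$ from the first part, one recognises a Taylor-series evaluation; because of the sub-exponential $2^{\sqrt{\delta m}}$ factor and the $\sqrt{m}$ factor, the calculation is subtler than in the $p_n/g_n$ case, but the dominant contribution still comes from $m=n-k$ with $k$ small, so expanding $(m/n)^{a}\cdot 2^{\sqrt{\delta m}-\sqrt{\delta n}}$ to first order in $k/n$ and summing the resulting exponential series should give $e^{-\pi^2/12}$; the factor of $\tfrac{1}{2}$ arises from the halved exponential rate $\delta$ compared to $2\delta$ in the non-symmetric case (which corresponds to $6/\pi^2$ versus $12/\pi^2$).

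The principal obstacle is the coupled two-variable saddle-point analysis needed for the first step.  Unlike Zagier's setting, where the sum in \eqref{eq-zag} has a single index of summation whose saddle converges to a constant, here both the outer index $n$ and the $q$-series variable $s$ are moving simultaneously, and the unusual $2^{\sqrt{\delta n}}$ factor signals that the saddle in $s$ sits at a polynomial-in-$n$ scale.  Making this rigorous will require the modular transformation for $\eta$ not just at $s\to0^+$ but in a neighbourhood of the saddle, together with careful uniform error estimates, precisely in the spirit of (but technically more demanding than) Zagier's original argument.  Everything downstream, including the ratio $q_n/r_n$, is a comparatively routine extraction once the leading asymptotic of $r_n$ is in hand.
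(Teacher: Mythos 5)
You should first be aware that the statement you are addressing is presented in the paper as Conjecture~\ref{con-sym}, not as a theorem: the paper offers no proof, explicitly lists the asymptotic enumeration of self-dual interval orders as its main open problem, and arrives at the stated form of $r_n$ (including the numerical constant $\gamma\approx 1.361951039$) purely by computing coefficients from Corollary~\ref{cor-sym-gen} and accelerating convergence in the manner of Zagier. Your proposal is therefore not competing with an existing proof, and as written it does not close the gap either: it is a roadmap whose decisive step --- the coupled saddle-point analysis in the outer summation index and the variable $s$, which is supposed to produce the anomalous shape $(\delta n/e)^{n/2}2^{\sqrt{\delta n}}$ and the value of $\gamma$ --- is described in outline but never carried out. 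You acknowledge this yourself; until the uniform error estimates and the modular transformation away from $s\to 0^+$ are actually executed, nothing is proved, and the plausibility of the plan is exactly the evidence the paper already has.

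Beyond the incompleteness, one concrete step in your third part would fail. The relation $R(x)=Q(x/(1-x))$, and hence the binomial transform $q_n=\sum_{m}r_m(-1)^{n-m}\binom{n-1}{m-1}$, does not hold for self-dual interval orders. Passing from a primitive self-dual Fishburn matrix to a general one blows up each diagonal cell independently (contributing $x/(1-x)$ per primitive unit of size) but blows up each symmetric pair of off-diagonal cells in tandem, since the two cells must carry equal values (contributing $x^2/(1-x^2)$ per primitive unit, i.e.\ $x^2$ worth of size at a time). This is precisely why the paper writes $\SGP=\frac{1}{1-x}\SP\bigl(t,\frac{v^2}{1-v^2},\frac{w^2}{1-w^2},\frac{z}{1-z}\bigr)-t$ versus $\SPP=(1+x)\SP(t,v^2,w^2,z)-t$: specializing all variables to $x$ does not reduce to a single substitution $x\mapsto x/(1-x)$, because that substitution sends $x^2$ to $x^2/(1-x)^2\neq x^2/(1-x^2)$. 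Consequently the computation modeled on the $p_n/g_n$ argument does not transfer as written, and your proposed origin of the factor $\tfrac12$ in the limit (a ``halved exponential rate'') is unsubstantiated --- note that in Conjecture~\ref{con-red}, where the rate is likewise halved, the analogous limit is $e^{-\pi^2/12}$ with no factor $\tfrac12$, so the $\tfrac12$ here must come from structure specific to the size statistic (e.g.\ the corner-cell prefactors $(1+x)$ versus $\frac{1}{1-x}$ and the $2^{\sqrt{\delta n}}$ term), which your sketch does not capture.
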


In a similar vein, one can ask whether the multi-variate generating functions
can provide information about the distribution of the relevant statistics
within the set of interval orders of a given size, or within the set of Fishburn
matrices of a given dimension. Here are two examples of the kind of questions
that arise.

\begin{problem} What is the average sum of entries in a primitive
$k\times k$ Fishburn matrix?
\end{problem}
\begin{problem}
What is the average number of minimal elements in an $n$-element interval order?
\end{problem}

\section*{Acknowledgement}
I am indebted to Michael Drmota for his insightful remarks and helpful
suggestions.
\small
\bibliographystyle{plain}
\bibliography{intord}

\end{document}